%% LyX 2.0.0 created this file.  For more info, see http://www.lyx.org/.
%% Do not edit unless you really know what you are doing.
\documentclass[english]{article}
\usepackage{fourier}
\usepackage[T1]{fontenc}
\usepackage[latin9]{inputenc}
\usepackage[a4paper]{geometry}
\geometry{verbose,tmargin=3cm,bmargin=2cm,lmargin=2cm,rmargin=2cm}
\usepackage{babel}
\usepackage{amsthm}
\usepackage{amsmath}
\usepackage{amssymb}

\makeatletter
%%%%%%%%%%%%%%%%%%%%%%%%%%%%%% Textclass specific LaTeX commands.
\usepackage{enumitem}		% customizable list environments
      % auxiliary length 
\theoremstyle{plain}
\newtheorem{thm}{\protect\theoremname}
  \theoremstyle{plain}
  \newtheorem{prop}[thm]{\protect\propositionname}
  \theoremstyle{remark}
  \newtheorem*{rem*}{\protect\remarkname}
  \theoremstyle{plain}
  \newtheorem{lem}[thm]{\protect\lemmaname}
 \theoremstyle{remark}
 \newtheorem*{defn*}{\protect\definitionname}
  \theoremstyle{remark}
  \newtheorem{rem}[thm]{\protect\remarkname}
  \theoremstyle{plain}
  \newtheorem{claim}[thm]{\protect\claimname}
  \theoremstyle{plain}
  \newtheorem{conjecture}[thm]{\protect\conjecturename}

\@ifundefined{date}{}{\date{}}
%%%%%%%%%%%%%%%%%%%%%%%%%%%%%% User specified LaTeX commands.

\@ifundefined{showcaptionsetup}{}{%
 \PassOptionsToPackage{caption=false}{subfig}}
\usepackage{subfig}
\makeatother

  \providecommand{\claimname}{Claim}
  \providecommand{\conjecturename}{Conjecture}
  \providecommand{\definitionname}{Definition}
  \providecommand{\lemmaname}{Lemma}
  \providecommand{\propositionname}{Proposition}
  \providecommand{\remarkname}{Remark}
\providecommand{\theoremname}{Theorem}

\makeatletter \renewenvironment{proof}[1][\proofname] {\par\pushQED{\qed}\normalfont\topsep6\p@\@plus6\p@\relax\trivlist\item[\hskip\labelsep\bfseries#1\@addpunct{.}]\ignorespaces}{\popQED\endtrivlist\@endpefalse} \makeatother

\begin{document}

\title{Component Games on Regular Graphs}

\author{Rani Hod%
\thanks{School of Computer Science, Raymond and Beverly Sackler Faculty of
Exact Sciences, Tel Aviv University, Tel Aviv, Israel. E-mail: rani.hod@cs.tau.ac.il.
Research supported by an ERC advanced grant.%
} \and Alon Naor%
\thanks{School of Mathematical Sciences, Raymond and Beverly Sackler Faculty
of Exact Sciences, Tel Aviv University, Tel Aviv, Israel. E-mail:
alonnaor@tau.ac.il.%
}}
\maketitle
\begin{abstract}
We study the $\left(1:b\right)$ Maker--Breaker component game, played
on the edge set of a $d$-regular graph. \textsc{Maker}'s aim in this
game is to build a large connected component, while \textsc{Breaker}'s
aim is to not let him do so. For all values of \textsc{Breaker}'s
bias $b$, we determine whether \textsc{Breaker} wins (on any $d$-regular
graph) or \textsc{Maker} wins (on almost every $d$-regular graph)
and provide explicit winning strategies for both players.

To this end, we prove an extension of a theorem by Gallai--Hasse--Roy--Vitaver
about graph orientations without long directed simple paths.
\end{abstract}

\section{Introduction}

Let $X$ be a finite set, let $\mathcal{F}\subseteq2^{X}$ be a family
of subsets of $X$, and let $m,b$ be two positive integers. In the
$\left(m:b\right)$ Maker--Breaker game $\left(X,\mathcal{F}\right)$,
two players, called \textsc{Maker} and \textsc{Breaker}, take turns
in claiming previously unclaimed elements of $X$. On \textsc{Maker}'s
move, he claims $m$ elements of $X$, and on \textsc{Breaker}'s move,
he claims $b$ elements.%
\footnote{The player who makes the very last move may not be able to complete
$m$ (or $b$) steps, so he stops after claiming all remaining elements.%
} The game ends when all of the elements have been claimed by either
of the players. The description of the game is complete by stating
which of the players is the first to move, though usually it makes
little difference.\textsc{ Maker} wins the game $\left(X,\mathcal{F}\right)$
if by the end of the game he has claimed all the elements of some
$F\in\mathcal{F}$; otherwise \textsc{Breaker} wins.%
\footnote{For convenience, we typically assume that $\mathcal{F}$ is closed
upwards, and specify only the inclusion-minimal elements of $\mathcal{F}$. %
} Since these are finite, perfect information games with no possibility
of draw, for each setup of $\mathcal{F},m,b$ and the identity of
the first player, one of the players has a strategy to win regardless
of the other player's strategy. Therefore, for a given game we may
say that the game is \textsc{Maker}\textquoteright{}s win, or alternatively
that it is \textsc{Breaker}\textquoteright{}s win. The set $X$ is
referred to as the \emph{board} of the game, and the elements of $\mathcal{F}$
are referred to as the \emph{winning sets}. 

When $m=b=1$, we say that the game is \emph{unbiased}; otherwise
it is \emph{biased}, and the positive integers $m$ and $b$ are called
the \emph{bias} of \textsc{Maker} and \textsc{Breaker}, respectively.
Maker--Breaker games are \emph{bias monotone}. It means that if \textsc{Maker}
wins some game with bias $(m:b)$, he also wins this game with bias
$(m':b')$, for every $m'\geq m$, $b'\leq b$. Similarly, if \textsc{Breaker}
wins a game with bias $(m:b)$, he also wins this game with bias $(m':b')$,
for every $m'\leq m$, $b'\geq b$. Indeed, suppose that some player
has a winning strategy with bias $c$, and now he plays with bias
$c'>c$. He can use his old strategy and in addition claim arbitrarily
$c'-c$ extra elements per move and pretend he did not claim them;
whenever his strategy tells him to claim some element he has previously
claimed he just claims arbitrarily some unclaimed element. Similarly,
if his opponent claims less elements, he can assign (in his mind)
some extra elements to his opponent in each move, and continue with
his strategy. The same reasoning shows that it is never a disadvantage
in a Maker--Breaker game to be the first player, and a winning strategy
as a second player can be used as a winning strategy as a first player.
This bias monotonicity allows us to define the \emph{threshold bias}:
for a given game $\mathcal{F}$, the threshold bias $b^{*}$ is the
value for which \textsc{Maker} wins the game $\mathcal{F}$ with bias
$(1:b)$ for every $b\leq b^{*}$, and \textsc{Breaker} wins the game
$\mathcal{F}$ with bias $(1:b)$ for every $b>b^{*}$. 

In this paper, our attention is dedicated to the $\left(1:b\right)$
Maker--Breaker $s$-component game on regular graphs; that is, the
board is the edge set of some $d$-regular graph $G$ on $n$ vertices
and the winning sets are connected components of $G$ with $s$ vertices.

\subsection{Previous results}

A natural case to consider is $s=n$; that is, the winning sets are
the spanning trees of $G$. This $\left(1:b\right)$ $n$-component
game is also known as the \emph{connectivity} game.

The unbiased game was completely solved by Lehman~\cite{Lehman-64},
who showed that \textsc{Maker} wins the $\left(1:1\right)$ connectivity
game on a graph $G$ if and only if $G$ contains two edge disjoint
spanning trees. It follows easily from~\cite{Nash-61,Tutte-61} that
if $G$ is $2k$-edge-connected then it contains $k$ pairwise independent
spanning trees; thus, \textsc{Maker} wins the $\left(1:1\right)$
connectivity game on $4$-regular $4$-edge-connected graphs, whereas
\textsc{Breaker} trivially wins the $\left(1:1\right)$ connectivity
game on graphs with less than $2n-2$ edges, i.e., average degree
under $4-O\left(1/n\right)$.

For denser graphs, since \textsc{Maker} wins the unbiased game by
such a large margin, it only seems fair to even out the odds by strengthening
\textsc{Breaker}, giving him a bias $b\ge2$. First and most natural
board to consider is the edge set of the complete graph $K_{n}$ (i.e.,
$d=n-1$). Chv\'atal and Erd\H{o}s~\cite{CH-78} showed that $\left(\frac{1}{4}-o\left(1\right)\right)n/\log n\le b^{*}\left(K_{n}\right)\le\left(1+o\left(1\right)\right)n/\log n$;
the upper bound was proved to be tight by Gebauer and Szab\'o~\cite{GS-09};
that is, \textsc{$b^{*}\left(K_{n}\right)=\left(1+o\left(1\right)\right)n/\log n$.}
The doubly-biased connectivity game $\left(m:b\right)$ on $K_{n}$
was considered by Hefetz et al.~\cite{HMS-12}, where the winner
was determined for almost all values of $m$ and $b$.

Another natural board to consider is the edge set of a random graph.
Stojakovi\'{c} and Szab\'o~\cite{SS-05} considered the well known
Erd\H{o}s--R\'enyi random graph $\mathcal{G}_{n,p}$, in which each
of the ${n \choose 2}$ possible edges appears independently with
probability $p$. They showed that almost surely $b^{*}\left(\mathcal{G}_{n,p}\right)=\Theta\left(np/\log n\right)$,
where \textsc{Breaker}'s win holds for any $0\le p\le1$ while \textsc{Maker}'s
win requires $p\ge\left(1+o\left(1\right)\right)\log n/n$ for $\mathcal{G}_{n,p}$
to be foremost connected. A different random graph model, the random
$d$-regular graph $\mathcal{G}_{n,d}$ on $n$ vertices, was considered
by Hefetz et al.~\cite{HKSS-11}. They showed that almost surely
$b^{*}\left(\mathcal{G}_{n,d}\right)\ge\left(1-\epsilon\right)d/\log_{2}n$
for $d=o\left(\sqrt{n}\right)$.%
\footnote{By concentration of the binomial distribution, when $d=\Omega\left(\sqrt{n}\right)$,
$\mathcal{G}_{n,d}$ is quite close to $\mathcal{G}_{n,p}$ for $p=d/n$.%
} Moreover, they showed that $b^{*}\left(G\right)\le\max\left\{ 2,\bar{d}/\log n\right\} $
for a graph $G$ of average degree $\bar{d}$, so the result is asymptotically
tight.

\bigskip{}
\textsc{Breaker}'s strategy in practically all results mentioned above
is to deny connectivity by isolating a single vertex. Much less is
known, however, for the case $s<n$. It seems that even if \textsc{Breaker}
is able to isolate a vertex in a constant number of moves, it does
little to prevent \textsc{Maker} from winning the $s$-component game
for $s=\Omega\left(n\right)$.

\subsection{\label{sec:our-results}Our results}

Instead of considering the threshold bias $b^{*}$, we shift the focus
to the maximal component size $s$ achievable by \textsc{Maker} in
the $\left(1:b\right)$ game, for a given bias $b$. Let us denote
this quantity by $s_{b}^{*}\left(G\right)$, and let, for $d\ge3$,
\[
s_{b}^{*}\left(n,d\right)=\max\left\{ s_{b}^{*}\left(G\right):\mbox{\ensuremath{G} is a \ensuremath{d}-regular graph on \ensuremath{n} vertices}\right\} .
\]

For $b\ge2d-2$, \textsc{Breaker} can immediately isolate each edge
claimed by \textsc{Maker} in the $\left(1:b\right)$ game, so trivially
$s_{b}^{*}\left(G\right)=2$. Furthermore, \textsc{Breaker} can do
something similar while $b>d-2$, as the following proposition shows:
\begin{prop}
\label{prop:breaker-trivial-win}For any positive $k$, $s_{d-2+k}^{*}\left(n,d\right)\le2\left\lceil d/k\right\rceil $.
\end{prop}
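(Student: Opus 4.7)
The plan is to equip Breaker with an explicit local-response strategy and to track a single per-component potential. After every Maker move, let $C$ denote the unique Maker-component containing the newly claimed edge; Breaker then claims up to $b=d-2+k$ still-unclaimed edges of the cut $\partial C$ between $V(C)$ and $V\setminus V(C)$, taking them all and distributing the surplus arbitrarily if fewer than $b$ are available.

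The heart of the argument is the inductive claim that after every Breaker turn, every Maker-component $C$ satisfies
\[
f(C)\;\le\;\max\bigl(0,\;d+k-k|V(C)|\bigr),
\]
where $f(C)$ is the number of unclaimed edges in $\partial C$. The verification splits into the four possible effects of Maker's move. Creating a new component $\{u,v\}$ starts with $f(C)\le 2(d-1)$, and Breaker's $b$ boundary moves reduce it to at most $d-k$, matching the bound at $|V(C)|=2$. Extending $C$ by a new vertex $w$ raises $f(C)$ by at most $d-2$ (one loss for Maker's edge, at most $d-1$ new boundary edges at $w$), so Breaker's $b=(d-2)+k$ boundary moves yield the required net drop of $k$ as $|V(C)|$ rises by one. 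Internal Maker moves leave $f(C)$ unchanged and Breaker only reduces it. A merge of $C_1$ and $C_2$ uses the identity $f(C_1\cup C_2)=f(C_1)+f(C_2)-2\varepsilon_u$, with $\varepsilon_u\ge 1$ the count of unclaimed $C_1$--$C_2$ edges just before Maker's move, which combines the two invariants with enough slack for Breaker's $b$ moves to restore the invariant on the merged component. If at any step the free boundary is smaller than $b$, then $f(C)$ collapses to zero and the bound holds trivially.

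The size bound then follows readily. Any Maker move that enlarges a component is played on an unclaimed edge of $\partial C'$ for some pre-growth component $C'$, so $f(C')\ge 1$ after the previous Breaker turn, and the invariant forces $|V(C')|\le\lfloor(d+k-1)/k\rfloor=\lceil d/k\rceil$. An extension therefore yields a component of size at most $\lceil d/k\rceil+1\le 2\lceil d/k\rceil$, and a merge of two such components yields one of size at most $2\lceil d/k\rceil$.

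I expect the merge case to be the main obstacle, since a single Maker move can abruptly double the size of his largest component and swell the raw boundary count. The invariant is tuned precisely for this: written as $f(C)+k|V(C)|\le d+k$ per component, it doubles under a merge to a combined budget of $2(d+k)$, and Breaker's $b=d-2+k$ boundary moves, together with the $-2\varepsilon_u\le -2$ saving from overlapping cut edges, absorb the excess exactly to restore the single-component form.
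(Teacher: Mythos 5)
Your proposal is correct and follows essentially the same route as the paper: a reactive Breaker strategy (claim boundary edges of the component Maker just touched) together with the per-component linear potential $f(C)+k|V(C)|\le d+k$, which is exactly the paper's Claim~\ref{clm:reactive} specialized to $b=d-2+k$, followed by the observation that any component that can still grow has size at most $\left\lceil d/k\right\rceil$ and a merge at most doubles this. The paper simply abstracts the invariant into a general claim about reactive strategies (valid for all $b$) and treats extension/creation as the special case of a merge where one side is a singleton, but the potential function, the case analysis, and the final $2\left\lceil d/k\right\rceil$ bound are the same.
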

In the $\left(1:d-2\right)$ game, \textsc{Breaker} can still restrict
the size of \textsc{Maker}'s connected components.
\begin{thm}
\label{thm:breaker-win}$s_{d-2}^{*}\left(n,d\right)\le\alpha_{d}+\beta_{d}\log n$,
where $\alpha_{d}$ and $\beta_{d}$ depend only on $d$.\end{thm}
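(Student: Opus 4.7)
The plan is to reduce Theorem~\ref{thm:breaker-win} to the orientation result advertised in the abstract. First I would produce an orientation $\vec{G}$ of the $d$-regular board $G$ with two properties: (i) the out-degree of every vertex lies in $\{\lfloor d/2\rfloor,\lceil d/2\rceil\}$, and $\delta^+(u)\le\delta^+(v)$ along every directed edge $u\to v$ (so that $\delta^+(u)+\delta^-(v)\le d$ uniformly); and (ii) every directed simple path in $\vec{G}$ has at most $\alpha_d+\beta_d\log n$ vertices. Once $\vec{G}$ is in hand, the strategy will force \textsc{Maker}'s subgraph, read as a subdigraph of $\vec{G}$, to have maximum in- and out-degree $1$. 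Its weakly connected components are then directed paths or directed cycles in $\vec{G}$, whose sizes are bounded by the maximum directed simple path length, hence by $\alpha_d+\beta_d\log n$.

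\textsc{Breaker}'s strategy will be purely reactive: when \textsc{Maker} claims an edge $e=uv$ with orientation $u\to v$ in $\vec G$, \textsc{Breaker} claims every still unclaimed out-edge at $u$ together with every still unclaimed in-edge at $v$. The number of claims is at most $(\delta^+(u)-1)+(\delta^-(v)-1)=\delta^+(u)+\delta^-(v)-2\le d-2$ by property~(i), matching \textsc{Breaker}'s bias. A routine induction then shows the invariant that after every \textsc{Breaker} move, every vertex has at most one \textsc{Maker} out-edge and at most one \textsc{Maker} in-edge in $\vec G$: if \textsc{Maker} were to claim a second out-edge at some $u$, that edge would already be \textsc{Breaker}'s from the response to the first out-edge at $u$, a contradiction. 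With this invariant in force, the bound on component sizes follows immediately from property~(ii) of $\vec G$ (directed cycles, although not simple paths, have length at most one more than the longest directed simple path and so are controlled by the same bound).

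The bulk of the work lies in the orientation theorem itself, which I would state and prove separately as the paper's extension of Gallai--Hasse--Roy--Vitaver. The classical theorem, combined with Brooks' theorem, produces an orientation of $G$ with no directed path on $d+1$ vertices, but offers no control over degrees; conversely, a balanced orientation (for instance an Eulerian one when $d$ is even) need not have short directed paths. The extension must achieve both at once, accepting a $\log n$ slack in the path-length bound. I would attempt a recursive approach: iteratively bisect $V(G)$ into two parts of comparable size, orient all crossing edges so as to respect the desired degree monotonicity (for example, always from the side of smaller tentative out-degree to the side of larger), and recurse inside each part. Each bisection contributes a constant to the maximum directed-path length, and a depth of $O(\log n)$ reduces the problem to pieces of bounded size that can be handled directly by the classical Gallai--Hasse--Roy--Vitaver theorem. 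The main technical obstacle, and where the contribution beyond the classical statement lies, is to carry out the bisection while maintaining both the degree-balance property~(i) and the path-length bound~(ii), especially in the $d$ odd case, where the $\pm 1$ slack in the uniform out-degree forces the strict monotonicity $\delta^+(u)\le\delta^+(v)$ on every directed edge rather than merely an Eulerian-type equality.
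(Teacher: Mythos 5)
Your reduction has a clean and appealing shape, but property~(i) of the orientation you require cannot be achieved in general, and the construction you sketch for~(ii) does not yield the claimed bound.

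\textbf{Property~(i) is infeasible for odd $d$.} You ask for $\delta^+(u)\le\delta^+(v)$ along every directed edge $u\to v$. This monotonicity alone forces the orientation to be exactly Eulerian. Indeed, let $\mu$ be the minimum out-degree and $L=\{v:\delta^+(v)=\mu\}$. If $u\to v$ with $v\in L$, then $\delta^+(u)\le\mu$, so $u\in L$; hence every in-edge of a vertex of $L$ comes from $L$. Counting the edges inside $L$ by in-degrees gives $(d-\mu)|L|$, while counting by out-degrees gives at most $\mu|L|$, so $\mu\ge d/2$. Since the average out-degree is exactly $d/2$, every out-degree equals $d/2$. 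For odd $d$ this is impossible, so no orientation satisfies~(i) when $d$ is odd --- e.g.\ for $d=3$ the theorem is the $(1:1)$ game, which your strategy cannot address at all. Even for even $d$, the condition pins you to an Eulerian orientation, and you give no argument that an Eulerian orientation with all directed simple paths of length $O(\log n)$ exists; this is a much more rigid demand than what the paper's Lemma~\ref{lem:short-orientation} provides, which only requires positive out-degree.

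\textbf{The bisection does not bound the path length.} If you bisect $V=A\cup B$ and orient all crossing edges from $A$ to $B$, a directed simple path can wander through $A$, cross once, and then wander through $B$: the recurrence is $f(n)\le f(|A|)+f(|B|)+1\approx 2f(n/2)+1$, which solves to $\Theta(n)$, not $O(\log n)$. ``Each bisection contributes a constant'' would require the recursion to take a \emph{max}, not a \emph{sum}, over the two sides, and your construction does not provide that. Moreover the bisection offers no control over out-degree balance at the boundary, so it is in tension with property~(i) as well.

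Because property~(i) is essential to your budget count $(\delta^+(u)-1)+(\delta^-(v)-1)\le d-2$, the whole ``force Maker's digraph to have in- and out-degree at most one'' invariant cannot be set up this way. The paper takes a genuinely different route: it only asks the orientation $D$ to have positive out-degree at every vertex (Lemma~\ref{lem:short-orientation}), which is achievable with $l(D)\le\chi(G)+O(\log n/\log\log d)$. \textsc{Breaker}'s strategy then does not control degrees directly; instead it keeps each of \textsc{Maker}'s components a directed tree $T$ in $D$ rooted at a vertex $r$, with all free in-edges of $T$ entering $r$ (Claim~\ref{clm:directed-trees}), and it separately bounds the tree's width via a reactive-strategy counting argument (Claim~\ref{clm:reactive} and Proposition~\ref{prop:width-bound}). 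The height is then bounded by $l(D)$ and the size by height times width. Your idea would be simpler if the orientation existed --- no width bookkeeping --- but it requires an unavailable combination of balance and short paths, and the tree/width decomposition in the paper is precisely what makes a weak positive-out-degree orientation sufficient.

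If you want to pursue a ``max in/out degree one'' strategy, you would need to revisit the budget accounting so it works with mere positive out-degree, which seems to lead you right back to the paper's directed-tree argument.
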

\begin{rem*}
Our proof yields $\alpha_{d}=O\left(d^{2}\right)$ and $\beta_{d}=O\left(d/\log\log d\right)$.
\end{rem*}
The proof of Theorem~\ref{thm:breaker-win} relies on the following
combinatorial lemma, which may be of independent interest.
\begin{lem}
\label{lem:short-orientation}Let $G$ be a graph on $n$ vertices
with minimal degree $\delta\ge3$. Then, there exists an orientation
$D$ of $G$ such that every vertex has a positive out-degree and
all simple directed paths in $D$ are of length at most $\chi\left(G\right)+\kappa_{\delta}\log n$,
where $\kappa_{\delta}=O\left(1/\log\log\delta\right)$.\end{lem}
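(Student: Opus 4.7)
My plan is to adapt the Gallai--Hasse--Roy--Vitaver theorem. First, I would fix an optimal proper $\chi(G)$-coloring $c:V\to[\chi]$ of $G$ and orient every edge from its lower-color endpoint to its higher-color one; call this acyclic orientation $D_0$. Every directed simple path in $D_0$ has strictly increasing colors along it, so it has length at most $\chi-1$. The sinks of $D_0$ are precisely the vertices with no higher-color neighbor; in particular, the entire top color class $V_{\chi}$ consists of sinks, and each such sink has in-degree at least $\delta$. The remaining task is to modify $D_0$ to eliminate all sinks while increasing the longest directed simple path by only $O(\log n/\log\log\delta)$.

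I would do this iteratively. In round $i\geq 1$ let $S_i$ denote the current set of sinks; for each $v\in S_i$ I would reverse one carefully chosen in-edge $u\to v$, so that $v$ gains out-degree $1$. The target vertex $u$ may itself become a new sink if its only remaining out-edge was to $v$, so the choice of reversal map $f_i:S_i\to V$ is the technical crux. Exploiting the minimum degree $\delta\geq 3$ (so that every current sink has at least $\delta$ potential flip-targets), I would establish by an averaging or probabilistic argument that $f_i$ can be chosen so that (a)~$|S_{i+1}|\leq |S_i|/\log\delta$; and (b)~the edges reversed in round $i$ form a sufficiently sparse structure (for instance, a bounded-multiplicity matching between the current sinks and a small set of targets) that any simple directed path can cross at most $O(1)$ of them. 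After at most $O(\log_{\log\delta} n)=O(\log n/\log\log\delta)$ rounds, the sink set is empty, and the longest directed simple path grows by only $O(1)$ per round, giving a total bound of $(\chi-1)+O(\log n/\log\log\delta)$, as required.

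The hard part will be pinning down the quantitative balance in (a)--(b). A too aggressive choice of reversals shrinks $|S_i|$ quickly but risks concatenating many round-$i$ reversed edges along a single simple directed path, destroying the $O(1)$-per-round growth; a too conservative choice keeps paths short but fails to shrink $S_i$ fast enough. Establishing a shrinkage factor of exactly $\log\delta$ together with only $O(1)$ additional path length per round is where the minimum-degree hypothesis is used most heavily: locally, to supply several in-edges to pick from for every current sink, and globally, to ensure that the reversed edges are sufficiently diffuse that no simple directed path can chain more than a constant number of them. This interplay is precisely what should yield the unusual $1/\log\log\delta$ factor in $\kappa_{\delta}$.
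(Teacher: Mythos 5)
Your plan diverges substantially from the paper's argument, and unfortunately it leaves exactly the hard part unresolved. You correctly note that the technical crux is to choose the reversal maps $f_i$ so that (a) $|S_{i+1}|\le |S_i|/\log\delta$ and (b) any simple directed path uses only $O(1)$ edges reversed in a given round, but you do not give an argument for either claim, and it is not clear that one exists. For (a): a target vertex $u$ becomes a new sink only if \emph{all} of its out-neighbors in the current orientation are sinks that simultaneously elect to reverse their edge to $u$; this is a global constraint-satisfaction problem, and there is no obvious averaging or probabilistic argument yielding a guaranteed $\log\delta$ shrinkage factor from the minimum-degree hypothesis alone. For (b): even granting a per-round sparsity bound on the reversed set, you must control how reversed edges from different rounds concatenate with descending segments of the original coloring orientation, and you offer no mechanism to prevent a directed path from collecting $\Theta(1)$ edges from \emph{every} round, which would spoil the additivity that gives the claimed $O(\log n/\log\log\delta)$ total. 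The suggestive appearance of $\log\delta$ as a shrinkage factor is not backed by anything in the minimum-degree assumption; in particular, the $1/\log\log\delta$ saving has no structural source in your argument.

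The paper instead gets the $1/\log\log\delta$ factor from a Moore-type bound: it fixes $k\approx\log\delta/\log\log\delta$, extracts a maximal family $\mathcal{C}$ of nonadjacent induced cycles of length at most $2\lceil\log_{k-1}n\rceil$, orients those cycles cyclically and all incident edges into them (ensuring positive out-degree on $V_{\mathcal{C}}$), and then contracts $\mathcal{C}$ to a single vertex $s$. Every remaining vertex is within distance $O(\log_{\delta-1}n)$ of $s$, so one can layer $V\setminus V_{\mathcal{C}}$ by distance to $s$, orient inter-level edges toward $s$ (giving positive out-degree everywhere), and orient intra-level edges by Gallai--Hasse--Roy--Vitaver. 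Maximality of $\mathcal{C}$ forces each level graph (for $i>1$) to have girth greater than $2\lceil\log_{k-1}n\rceil$, hence $(k-1)$-degeneracy and $k$-colorability. Summing the per-level contributions gives $\chi(G)+k\cdot O(\log_{\delta-1}n)+O(\log_{k-1}n)$, which evaluates to $\chi(G)+O(\log n/\log\log\delta)$. This is where the $\log\log\delta$ saving actually lives; your sketch would need some analogous structural input to be salvageable.
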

\begin{rem*}
Note that $s_{b}^{*}\left(T_{b+1}\left(k\right)\right)\ge k$, where
$T_{r}\left(k\right)$ is the complete $r$-ary tree%
\footnote{That is, every non-leaf vertex has $r$ children.%
} with $k$ levels, since \textsc{Maker} can easily build a path from
the root to some leaf. Completing $T_{d-1}\left(k\right)$ to a $d$-regular
graph on $n=\left(d-1\right)^{k}$ vertices thus shows that $s_{d-2}^{*}\left(n,d\right)\ge\log_{d-1}n$.
\end{rem*}
To complement Theorem~\ref{thm:breaker-win}, we prove that in the
$\left(1:d-3\right)$ game on almost every graph, \textsc{Maker} can
already build a very large connected component.
\begin{thm}
\label{thm:maker-win}Let $\mathcal{G}_{n,d}$ be the random $d$-regular
graph on $n$ vertices, where $d\ge3$. Then, $s_{d-3}^{*}\left(\mathcal{G}_{n,d}\right)\ge\epsilon_{d}n$
almost surely, where $\epsilon_{d}>0$ depends only on $d$. In particular,
\textup{$s_{d-3}^{*}\left(n,d\right)\ge\epsilon_{d}n$.}\end{thm}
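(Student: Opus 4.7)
The plan is to equip \textsc{Maker} with a simple greedy extension strategy that exploits the expansion of a typical random $d$-regular graph. The case $d=3$ is trivial since \textsc{Breaker} has bias $0$, so assume $d\ge4$. By bias monotonicity it suffices to treat the case where \textsc{Maker} moves first. \textsc{Maker} picks an arbitrary starting vertex $v_{0}$ and maintains a tree $T$ rooted at $v_{0}$; at each of his moves he claims some unclaimed edge having exactly one endpoint in $T$, thereby adding a new vertex to $T$.

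The first step is to establish a uniform edge-density property of $\mathcal{G}_{n,d}$: there is a constant $\alpha_{d}>0$ such that almost surely every $S\subseteq V(G)$ with $|S|\le\alpha_{d}n$ satisfies $e_{G}(S)\le\tfrac{3}{2}|S|$. For $|S|\le c_{d}\log n$ this follows from the well-known fact that $\mathcal{G}_{n,d}$ almost surely contains no small dense subgraph (its girth is $(1-o(1))\log_{d-1}n$), and for a few very small $|S|$ one checks directly that the expected number of dense $S$-subgraphs is $o(1)$. For $c_{d}\log n\le|S|\le\alpha_{d}n$ the usual configuration-model union bound
\[
\Pr\!\left[\exists S\subseteq V(G),\,|S|=k,\,e_{G}(S)\ge\tfrac{3}{2}k\right]\le\binom{n}{k}\binom{\binom{k}{2}}{\lceil 3k/2\rceil}\!\left(\tfrac{d}{n}\right)^{\lceil 3k/2\rceil}
\]
summed over $k$ is $o(1)$ provided $\alpha_{d}$ is chosen small in terms of $d$.

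Given this density property, the analysis of the strategy is immediate. Just before \textsc{Maker}'s $k$-th move, $T$ has $k$ vertices, \textsc{Maker} has claimed $k-1$ edges, and \textsc{Breaker} has claimed at most $(k-1)(d-3)$ edges in total. Provided $|T|\le\alpha_{d}n$, the density bound yields
\[
|\partial_{G}(T)|=dk-2e_{G}(T)\ge(d-3)k,
\]
so at least $(d-3)k-(k-1)(d-3)=d-3\ge1$ edges of $\partial_{G}(T)$ are still unclaimed, and \textsc{Maker} can extend $T$. Iterating, $T$ grows to size $\alpha_{d}n$, producing a connected component of that size, and we may take $\epsilon_{d}=\alpha_{d}$.

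The main obstacle is the uniform density bound across the whole range $1\le|S|\le\alpha_{d}n$. Handling the intermediate regime $|S|=\Theta(\log n)$ cleanly requires a careful splitting between the girth-based argument and the union-bound calculation, and $\alpha_{d}$ must be chosen sufficiently small in $d$ for the strict bound $e_{G}(S)<\tfrac{3}{2}|S|$ to hold with a little room to spare; everything else is a routine counting argument.
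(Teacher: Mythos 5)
Your proposal has the same overall structure as the paper's proof: Maker plays the tree strategy of Section~\ref{sec:maker-strategy}, and the game-theoretic step reduces (via Proposition~\ref{prop:tree-strategy-works}) to an edge-isoperimetric inequality for $\mathcal{G}_{n,d}$ on sets of size up to $\epsilon_d n$. The genuine difference is that the paper simply cites the known expansion bound for random regular graphs (Lemma~\ref{lem:small-sets-expansion}, from~\cite{HLW-06}, applied with $\delta=1$), whereas you re-derive exactly the special case needed --- $e_G(S)<\frac{3}{2}|S|$ for all $|S|\le\alpha_d n$ --- from a configuration-model union bound. Done carefully this makes the argument self-contained, at the cost of a calculation the paper avoids by citation.

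Two points need repair, though. First, the claim that $\mathcal{G}_{n,d}$ has girth $(1-o(1))\log_{d-1}n$ is false: for fixed $d\ge3$, the number of cycles of each fixed length in $\mathcal{G}_{n,d}$ converges to a Poisson random variable with constant mean, so the girth is almost surely a bounded constant, not logarithmic. So the girth-based handling of $|S|\le c_d\log n$ does not work. Fortunately it is also unnecessary: your union-bound term $\binom{n}{k}\binom{\binom{k}{2}}{\lceil 3k/2\rceil}(d/n)^{\lceil 3k/2\rceil}$ is of order $\bigl[O(d^{3/2})(k/n)^{1/2}\bigr]^{k}$, which vanishes for $k\le3$, is $O(n^{-k/2+o(1)})$ for each fixed $k\ge4$, and is decreasing in $k$ on $[4,\alpha_d n]$ once $\alpha_d$ is a small enough multiple of $d^{-3}$; so a single union bound over all $4\le k\le\alpha_d n$ already gives $o(1)$, and the three-regime split should simply be dropped. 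Second, the reduction ``by bias monotonicity it suffices to treat the case where Maker moves first'' points the wrong way: first-mover advantage means a winning strategy as second player yields one as first, not conversely, and the standard (and stronger) convention is to let Breaker move first. There the arithmetic needs the strict bound $e_G(T)<\frac{3}{2}|T|$, i.e.\ $|\partial_G(T)|>(d-3)|T|$, since Breaker has already claimed up to $k(d-3)$ edges before Maker's $k$-th move. Your union bound (with the ceiling, i.e.\ on the event $e_G(S)\ge\lceil 3k/2\rceil$) in fact already yields the strict inequality, so simply state the density property strictly from the outset and delete the monotonicity remark.
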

\begin{rem*}
A quick calculation shows that $\epsilon_{d}\ge\mathrm{poly}\left(1/d\right)$.
\end{rem*}
\begin{figure}[h]
\noindent \begin{centering}
\subfloat[$\ensuremath{d=\Theta\left(1\right)}$]{$s_{b}^{*}\left(n,d\right)=\begin{cases}
\Theta\left(1\right), & b>d-2;\\
\Theta\left(\log n\right), & b=d-2;\\
\Theta\left(n\right), & b<d-2.
\end{cases}$

}\hspace{4em}\subfloat[$\ensuremath{d=O\left(\textrm{poly}\log n\right)}$]{$s_{b}^{*}\left(n,d\right)=\begin{cases}
\Theta\left(\textrm{poly}\log n\right), & b\ge d-2;\\
\Theta\left(n/\textrm{poly}\log n\right), & b<d-2.
\end{cases}$\vphantom{$s_{b}^{*}\left(n,d\right)=\begin{cases}
\Theta\left(1\right), & b>d-2;\\
\Theta\left(\log n\right), & b=d-2;\\
\Theta\left(n\right), & b<d-2.
\end{cases}$}

}
\par\end{centering}

\caption{\label{fig:phase-transition}Phase transition phenomenon at $b=d-2$}
\end{figure}

When $d$ is at most polylogarithmic in $n$, Theorems~\ref{thm:breaker-win}
and~\ref{thm:maker-win} show a phase transition phenomenon that
occurs at $b=d-2$; instead of tiny, polylogarithmic-sized components,
\textsc{Maker} is suddenly able to build a giant, almost linear-sized
component. When $d$ is constant, we even have a double-jump --- from
constant, through logarithmic, to linear-sized components. This is
summarized in Figure~\ref{fig:phase-transition}.

This behavior is somewhat consistent with the so-called random graph
intuition in positional games: oftentimes, the outcome of a game between
two intelligent players is the same as the outcome of that game between
two players acting randomly. Consider the bond percolation with parameter
$p$ (i.e., each edge is deleted independently with probability $1-p$).
It is known (see, e.g.,~\cite{ABS-04,NP-10}) that for well-expanding
$d$-regular graphs, where $d$ is constant, the size of the largest
connected component has a double-jump at $p=\frac{1}{d-1}$ --- it
is linear for $p\ge\frac{1+\epsilon}{d-1}$, logarithmic for $p\le\frac{1-\epsilon}{d-1}$,
and $\Theta\left(n^{2/3}\right)$ for $p=\frac{1}{d-1}$.

Although the sizes of the components are different, both the bond
percolation and $s_{b}^{*}\left(n,d\right)$ have a sharp threshold
at the same point, since in a random play of a $\left(1:b\right)$
game, \textsc{Maker} gets each edge with probability $\frac{1}{1+b}=\frac{1}{d-1}$.%
\footnote{This random graph intuition is not a formal argument, so we allow
ourselves to neglect the dependence of these random choices.%
}

\subsection{Notation}

We use standard Graph Theory terminology, and in particular use the
following:

For a given graph $G$ we denote by $V(G)$ and $E(G)$ the set of
its vertices and the set of its edges, respectively. We often just
use $V$ and $E$, when there is no chance of confusion. For two disjoint
sets of vertices $A,B\subseteq V$ we denote by $E(A,B)$ the set
of all edges $(a,b)\in E$ with $a\in A$ and $b\in B$. For a connected
component $S$ in \textsc{Maker}'s graph, and for an edge $e\in E$
we say that $e$ is \emph{incident} to $S$ if at least one of its
endpoints belongs to $S$; if both endpoints of $e$ belong to $S$,
we say that $e$ is \emph{inside }$S$.

When $G$ is a directed graph, we say that a vertex $v$ is \emph{reachable}
from a vertex $u$ if there is a directed path in $G$ from $u$ to
$v$.

An unclaimed edge is called \emph{free}. The act of claiming one free
edge by one of the players is called a \emph{step}. \textsc{Maker}'s
$m$ (\textsc{Breaker}'s $b$) successive steps are called a \emph{move}.
A \emph{round} in the game consists of one move of the first player,
followed by one move of the second player. Whenever \textsc{Maker}
claims a free edge, it becomes part of some connected component of
his; we then say he \emph{touched} that component. If a connected
component in \textsc{Maker}'s graph has at least one free edge adjacent
to it, we say it is a \emph{live} component.

\medskip{}

As mentioned before, if one of the players has a winning strategy
as a second player, he can use it to obtain a winning strategy as
a first player. Hence, when we describe \textsc{Maker}'s strategy
we assume that he is the second player, implying that under the described
conditions he can win as either a first or a second player. The same
goes for \textsc{Breaker}'s strategy.

\section{\label{sec:maker-strategy}\textsc{Maker}'s strategy}

Throughout this section we assume that the first player is \textsc{Breaker}.\medskip{}

In this section we describe and analyze a very basic strategy for
\textsc{Maker}, to which we refer throughout the paper as \emph{the
tree strategy}.\textsc{ Maker}'s goal is to build a component of size
$s$, and his strategy is to build a single connected component $T$.
He starts from a single arbitrary vertex $r$, and in every move he
adds a new vertex to $T$ by claiming a free edge $e\in E\left(T,V\setminus T\right)$.
If all edges in $E\left(T,V\setminus T\right)$ have already been
claimed by \textsc{Breaker}, and \textsc{Maker'}s component is of
size strictly less than $s$, he forfeits the game. Note that indeed
$T$ is a tree throughout the game.
\begin{defn*}
\label{def:isoperimetric-profile}Let $G=\left(V,E\right)$ be a graph
on $n$ vertices. For an integer $k=1,2,\ldots,\left\lfloor n/2\right\rfloor $,
we define 
\[
\Psi_{E}\left(G,k\right)=\min\left\{ \frac{\left|E\left(S,V\setminus S\right)\right|}{\left|S\right|}:S\subseteq V,1\le\left|S\right|\le k\right\} .
\]
Considered as a function of $k$, i.e., when $G$ is fixed, $\Psi_{E}$
is sometimes called the \emph{edge isoperimetric profile}.
\end{defn*}
The next proposition shows that if the graph has good expanding properties,
then \textsc{Breaker} cannot separate $T$ from $V\setminus T$ unless
$T$ is large enough.
\begin{prop}
\label{prop:tree-strategy-works}Assume $\Psi_{E}\left(G,k\right)>b$.
Then \textsc{Maker} is able to carry out the tree strategy for at
least $k$ rounds in the $\left(1:b\right)$ game on the graph $G$.\end{prop}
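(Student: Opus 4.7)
The plan is to proceed by induction on the round number $j$, showing that for every $j \in \{1, 2, \ldots, k\}$, immediately before \textsc{Maker}'s move in round $j$, the tree $T$ satisfies $|T| = j$ and the cut $E(T, V \setminus T)$ contains at least one free edge. Since the tree strategy keeps $T$ connected and adds exactly one new vertex per successful move, granting both conclusions suffices to certify that \textsc{Maker} can play the strategy for $k$ full rounds.

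The base case $j=1$ is immediate: $T = \{r\}$ has $|T|=1$, the cut $E(T, V\setminus T)$ consists of the $d(r) \ge \Psi_E(G,k) > b$ edges incident to $r$, and \textsc{Breaker} has claimed only $b$ edges total, so a free edge exists. For the inductive step, I would first observe the key invariant of the tree strategy: every edge \textsc{Maker} has ever claimed lies \emph{inside} $T$ at the current moment, because each edge was added precisely to bring its outside endpoint into $T$. Consequently, of the edges in $E(T, V\setminus T)$, none is owned by \textsc{Maker}; at most $jb$ of them are owned by \textsc{Breaker} (the total number of edges \textsc{Breaker} has claimed after his $j$-th move).

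To conclude, I would lower-bound the cut using the hypothesis: since $|T| = j \le k$, the definition of $\Psi_E$ gives
\[
\left|E(T, V\setminus T)\right| \;\ge\; |T|\cdot \Psi_E(G,k) \;>\; j\cdot b.
\]
Combining this with the upper bound $jb$ on \textsc{Breaker}'s edges inside the cut yields at least one free edge in $E(T, V\setminus T)$, which \textsc{Maker} claims, increasing $|T|$ to $j+1$ and preserving the tree structure for the next round.

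There is no real obstacle here; the only point requiring a moment of care is to notice that \textsc{Maker}'s own claimed edges never occupy slots in the current cut (so the count $jb$ on \textsc{Breaker}'s edges is the entire obstruction), and to verify that the bound $|T| \le k$ is maintained throughout all $k$ rounds we are analyzing, which allows the $\Psi_E(G,k)$ estimate to be applied uniformly.
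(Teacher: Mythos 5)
Your proof is correct and takes essentially the same approach as the paper's: count the edges in the cut $E(T,V\setminus T)$ using $\Psi_E(G,k)$, compare against the at most $jb$ edges \textsc{Breaker} has claimed, and conclude a free edge remains. The explicit induction wrapper and the observation that \textsc{Maker}'s own edges lie inside $T$ are harmless elaborations of the paper's one-paragraph counting argument.
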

\begin{proof}
Consider the moment before \textsc{Maker}'s $j$th move for some $1\le j\le k$.
We have $\left|T\right|=j\le k$ and thus $\left|E\left(T,V\setminus T\right)\right|>\left|T\right|b=jb$.
During $j$ moves, \textsc{Breaker} could have claimed at most $jb$
edges, so some edge of $E\left(T,V\setminus T\right)$ is still available
for \textsc{Maker} to claim.
\end{proof}
Since we only need \textsc{Maker}'s connected component to span a
constant fraction of the graph, we can make use of the following result
on the edge expansion of small sets in the random $d$-regular graph:
\begin{lem}[\cite{HLW-06}, Theorem~4.16]
\label{lem:small-sets-expansion}Let $d\ge3$ be an integer and let
$\delta>0$. Then there exists $\epsilon=\epsilon\left(d,\delta\right)>0$
such that $\Psi_{E}\left(\mathcal{G}_{n,d},\epsilon n\right)>d-2-\delta$
almost surely.
\end{lem}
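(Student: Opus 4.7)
Since this is stated as Theorem~4.16 of \cite{HLW-06}, the paper likely just cites it. Independently, I would prove it via a standard first-moment argument in the configuration model. Recall that in this model one assigns $d$ half-edges to each vertex and pairs the $dn$ resulting half-edges uniformly at random; the resulting multigraph is simple with probability $\Omega(1)$ (depending only on $d$), so any event of probability $1 - o(1)$ in this model holds almost surely in $\mathcal{G}_{n,d}$, and it therefore suffices to establish the isoperimetric bound in the configuration model.

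Fix $k \leq \epsilon n$ and $S \subseteq V$ with $|S| = k$. Since $dk$ counts each edge inside $S$ twice and each boundary edge once, the bad event $|E(S, V \setminus S)| \leq (d - 2 - \delta) k$ forces at least $m := \lceil (1 + \delta/2) k \rceil$ edges to lie inside $S$. Writing $X$ for the number of such inside-edges and applying Markov's inequality to $\binom{X}{m}$, whose expectation is exactly $\binom{dk}{2m}(2m-1)!! / \prod_{i=0}^{m-1}(dn - 2i - 1)$, a routine Stirling-type estimate gives
\[
\Pr[X \geq m] \leq \left(\frac{C d k}{n}\right)^{m}
\]
for an absolute constant $C$. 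Taking a union bound over the $\binom{n}{k} \leq (en/k)^{k}$ choices of $S$ then yields
\[
\Pr[\exists\text{ bad } S \text{ of size } k] \leq \left[e \cdot (Cd)^{1 + \delta/2} \cdot (k/n)^{\delta/2}\right]^{k}.
\]
Pick $\epsilon = \epsilon(d, \delta) > 0$ small enough that $e (Cd)^{1 + \delta/2} \epsilon^{\delta/2} \leq 1/2$; then each term is at most $2^{-k}$, and splitting the sum at $k = \sqrt{n}$ (using the sharper $(k/n)^{\delta k / 2}$ factor for small $k$ and the $2^{-k}$ bound for large $k$) shows the total is $o(1)$.

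The key point is that the threshold constant $d - 2$ is optimal for this first-moment approach: only when $m > k$ does the geometric decay $(k/n)^{m - k}$ dominate the entropy $\binom{n}{k}$, and $m > k$ requires strictly more than $(d - 2) k$ half-edges of $S$ to be paired externally. The only nonstandard wrinkle is the conditioning-on-simplicity step, which is routine for fixed $d$ but would require more work if one wanted $d$ to grow with $n$.
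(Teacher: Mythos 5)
The paper does not prove this lemma; it is cited verbatim from \cite{HLW-06} (Theorem~4.16), exactly as you suspect. Your independent derivation via the configuration model and a first-moment union bound is the standard proof of this fact and is essentially correct: the double-count $dk = 2\,|E(S)| + |E(S, V\setminus S)|$ correctly translates poor edge-boundary into $m = \lceil (1+\delta/2)k \rceil > k$ internal edges; the expectation of $\binom{X}{m}$ in the pairing model is as you write; the Stirling bound gives $(Cdk/n)^m$ with $C$ absolute; and the exponent gap $m - k = \Theta(\delta k)$ is precisely what beats the $(en/k)^k$ entropy term. Your observation that $\sum_k 2^{-k}$ alone is not $o(1)$ and that one must use the polynomial decay $(k/n)^{\delta k/2}$ for small $k$ (say $k \le \sqrt{n}$) is the right fix, and the conditioning-on-simplicity step is indeed routine for fixed $d$. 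Nothing to correct; your remark that $d-2$ is the natural threshold for this method is also the right intuition for why the paper's phase transition sits at $b = d-2$.
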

Taking $\delta=1$ in Lemma~\ref{lem:small-sets-expansion} and employing
the tree strategy yields Theorem~\ref{thm:maker-win}.
\begin{rem}
\label{rem:doubly-biased-maker}Lemma~\ref{lem:small-sets-expansion}
is strong enough to render the tree strategy effective also in the
doubly-biased $\left(m:b\right)$ game, as long as $b/m<d-2$. The
proof is completely analogous to the proof of Proposition~\ref{prop:tree-strategy-works}
when \textsc{Maker} is the first player, and very simple adjustments
are needed when \textsc{Breaker} starts.
\end{rem}

\section{\label{sec:breaker-strategy}\textsc{Breaker}'s strategy}

Throughout this section we assume that the first player is \textsc{Maker}.

\subsection{\label{sec:reactive}Reactive strategies}
\begin{defn*}
\label{def:reactive}A strategy of \textsc{Breaker} is called \emph{reactive}
if the following holds: in each of his steps, if the connected component
last touched by \textsc{Maker} is live, \textsc{Breaker} claims a
free edge incident to it.
\end{defn*}
Note that there can be many reactive strategies for \textsc{Breaker},
varying in the way that he chooses which free edge to claim among
those that are incident to \textsc{Maker}'s last touched component.
In this paper, we limit ourselves to reactive \textsc{Breaker} strategies;
this allows \textsc{Breaker} to control the number of free edges incident
to \textsc{Maker}'s connected components, as the following claim shows:
\begin{claim}
\label{clm:reactive}Let $b,d$ be positive integers and let $G$
be a $d$-regular graph. If \textsc{Breaker} uses a reactive strategy,
then throughout the $\left(1:b\right)$ Maker--Breaker game played
on the edge set of $G$, at the beginning of each round every connected
component $S$ in \textsc{Maker}'s graph is incident to at most $\left(d-2-b\right)\left|S\right|+b+2$
free edges.\end{claim}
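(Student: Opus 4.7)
The plan is to prove by induction on the round number the stronger invariant $F(S) \le \phi(S) := (d-2-b)|S| + b + 2$ for every \textsc{Maker} component $S$, where $F(S)$ denotes the number of free edges incident to $S$. The base case is immediate: at the start of round~1, every component is a singleton $\{v\}$ with $F(\{v\}) \le d = \phi(\{v\})$.

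For the inductive step, I will analyze how $F$ evolves during round~$i$. Let $e$ be \textsc{Maker}'s $i$-th edge, and let $X_1, X_2$ be the (possibly singleton, possibly coinciding) \textsc{Maker} components containing $e$'s endpoints; then the component touched by \textsc{Maker} is $X = X_1 \cup X_2$, and \textsc{Breaker}'s reactive $b$ steps all target $X$. The engine of the argument is the additivity
\[
\phi(X) = \phi(X_1) + \phi(X_2) - (b+2)
\]
(valid when $X_1 \neq X_2$) together with the inclusion--exclusion
\[
F(X)_{\text{after Maker}} = F(X_1) + F(X_2) - F(X_1, X_2) - 1,
\]
where $F(X_1, X_2)$ counts free edges with one endpoint in each $X_i$ and the $-1$ accounts for $e$ itself ceasing to be free. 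Combining the inductive hypothesis on $X_1, X_2$ with the obvious bound $F(X_1, X_2) \ge 1$ (witnessed by $e$) gives $F(X)_{\text{after Maker}} \le \phi(X) + b$.

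Next, \textsc{Breaker}'s reactive move removes $r = \min(b, F(X)_{\text{after Maker}})$ free edges incident to $X$. If $r = b$, we get $F(X) \le \phi(X)$ directly; if $r < b$ then $X$ becomes non-live, so $F(X) = 0 \le \phi(X)$, using $\phi(X) \ge b+2 > 0$ throughout the regime $b \le d-2$ in which the claim is applied (in particular for the $b = d-2$ case needed for Theorem~\ref{thm:breaker-win}). The internal-edge subcase $X_1 = X_2$ is similar and easier, since $|X|$ is unchanged and $F(X)$ only decreases. For a component $S$ unrelated to \textsc{Maker}'s step, $|S|$ stays fixed and $F(S)$ can only decrease (if any of \textsc{Breaker}'s spill-over non-reactive moves happens to claim an edge incident to $S$), so the invariant persists automatically.

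The main delicate point will be the inclusion--exclusion bookkeeping for $F(X)_{\text{after Maker}}$ when $X_1$ and $X_2$ share several common neighbours in $G$, and the careful distinction between the ``full'' and ``short'' reactive response subcases; the short subcase is where the non-negativity of $\phi(X)$ is consumed.
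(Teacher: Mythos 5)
Your proof is essentially the same as the paper's: the same induction on rounds, the same inclusion--exclusion accounting $F(X)_{\text{after Maker}} \le F(X_1) + F(X_2) - 2 \le \phi(X_1) + \phi(X_2) - 2 = \phi(X) + b$, followed by the reactive move claiming $\min(b,\cdot)$ incident free edges. The inside-edge case and untouched-component case you mention correspond to the paper's ``trivial'' cases.

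One small caveat about your remark on the ``short'' reactive subcase: you invoke $\phi(X) \ge b+2 > 0$ and state this holds ``throughout the regime $b \le d-2$ in which the claim is applied.'' In fact the claim is also invoked with $b = d - 2 + k > d-2$ in the proof of Proposition~\ref{prop:breaker-trivial-win}, where $\phi(S)$ can be negative for large dead components (and then $F(S) = 0 > \phi(S)$, so the invariant as literally stated can fail). This is a blemish the paper shares --- its proof silently assumes the drop to zero is fine --- and it is harmless because the claim is only ever consulted for \emph{live} components, for which a full $b$-step reactive response is forced and the bound $F(S) = F(S)_{\text{after M}} - b \le \phi(S)$ goes through regardless of the sign of $d-2-b$. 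So your argument is correct where it matters, but the justification you gave for the short subcase misidentifies the scope of application.
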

\begin{proof}
The claim trivially holds at the beginning of the game, as every connected
component is a single vertex, and vertex degrees in $G$ are all equal
to $d$. In every move, \textsc{Maker} either:
\begin{enumerate}[label=(\emph{\alph*})]
\item claims an edge inside some component; or
\item merges two connected components, i.e., claims a free edge between
two connected components $S_{1}$ and $S_{2}$, creating a new connected
component $S$ of size $\left|S\right|=\left|S_{1}\right|+\left|S_{2}\right|$.
\end{enumerate}
In the first case, the claim trivially holds no matter how \textsc{Breaker}
plays. In the second case, $S_{i}$ (for $i=1,2$) was incident before
\textsc{Maker}'s move to at most $\left(d-2-b\right)\left|S_{i}\right|+b+2$
free edges. As \textsc{Maker} has just claimed an edge incident to
both $S_{1}$ and $S_{2}$, after \textsc{Maker}'s move at most $\left(d-2+b\right)\left|S\right|+2\left(b+2\right)-2$
free edges are incident to the merged component $S$. \textsc{Breaker}
in his next move claims $b$ of these edges (or simply all of them,
if there are less than that), leaving at most $\left(d-2-b\right)\left|S\right|+2\left(b+2\right)-2-b=\left(d-2-b\right)\left|S\right|+b+2$
free edges incident to $S$, so the claim still holds.
\end{proof}
We can now prove Proposition~\ref{prop:breaker-trivial-win}.
\begin{proof}[Proof of Proposition~\ref{prop:breaker-trivial-win}]
By Claim~\ref{clm:reactive} we get that by using any reactive strategy,
\textsc{Breaker} can make sure that every component $S$ in \textsc{Maker}'s
graph will have at most $-k\left|S\right|+k+d$ free edges incident
to it. In particular, $k+d>k\left|S\right|$ for any live component
$S$, or equivalently $|S|<\left(d/k\right)+1$. This last inequality
may be rewritten as $|S|\leq\left\lceil d/k\right\rceil $. Since
every component in \textsc{Maker}'s graph was created by merging two
live components, the result follows.\end{proof}
\begin{rem*}
For fixed $d$ and $k$, the bound of Proposition~\ref{prop:breaker-trivial-win}
is tight for large enough $n$, via the tree strategy on $\mathcal{G}_{n,d}$.
\end{rem*}

\begin{rem}
\label{rem:doubly-biased-breaker}Reactive strategies are effective
for \textsc{Breaker} also in the doubly-biased $\left(m:b\right)$
game, as long as $b/m>d-2$. A proof very similar to the one above
shows that \textsc{Breaker} can limit \textsc{Maker} in the $\left(m:m\left(d-2\right)+k\right)$
game to connected components of size at most $\left(m+1\right)\left\lceil md/k\right\rceil $.
\end{rem}

\subsection{\label{sec:breaker-vs-tree}Playing against the tree strategy}

Before presenting a full-fledged strategy for \textsc{Breaker} in
the $\left(1:d-2\right)$ game, let us first consider a simplified
version of it, which remains effective as long as \textsc{Maker} adheres
to the tree strategy of Section~\ref{sec:maker-strategy}. Taking
Claim~\ref{clm:reactive} one step further, \textsc{Breaker} needs
to make sure that, before \textsc{Maker}'s tree $T$ grows too much,
the only free edges incident to it will be edges inside $T$. This
gives rise to the following definition:
\begin{defn*}
\label{def:self-colliding-path}Let $G$ be a graph. A simple path
$p=v_{1}v_{2}\cdots v_{k}$ in $G$ is called \emph{self-colliding}
if $v_{k}$ is adjacent to some $v_{i}$ for $1\le i\le k-2$. We
could also view $p$ as a simple path $v_{1}v_{2}\cdots v_{i-1}$,
which we call the \emph{tail}, leading to a simple cycle $v_{i}v_{i+1}\cdots v_{k}v_{i}$,
which we call the \emph{body}.%
\footnote{It is possible that the tail is empty, that is, $p$ is a cycle of
length $k$.%
}
\end{defn*}
We use the following variation of the Moore bound on the girth of
graphs with minimum degree $k$.
\begin{lem}
\label{lem:Moore-variant}Let $G$ be a graph on $n$ vertices with
minimum degree $\delta\left(G\right)\ge k$. Then, for every edge
$\left(u,v\right)\in E$, there is a self-colliding path $p$ starting
with $\left(u,v\right)$ of length at most $2\left\lceil \log_{k-1}n\right\rceil $.
In particular, $g\left(G\right)\le2\left\lceil \log_{k-1}n\right\rceil $.
Moreover, the distance along $p$ from $u$ to every body vertex is
at most $\left\lceil \log_{k-1}n\right\rceil $. \end{lem}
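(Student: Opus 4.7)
Set $L=\lceil\log_{k-1}n\rceil$. The plan is a Moore-bound style BFS argument starting from $v$, with the extra bookkeeping that $u$ sits at the root of the prescribed edge and that the collision one eventually finds must be unwound into a self-colliding path of the required shape.

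First I would run BFS from $v$ in $G$, treating $u$ as a leaf: I place $u$ in the first layer $N_1$ together with the other neighbours of $v$, but never explore $u$'s out-edges. Every vertex in $V\setminus\{u\}$ has degree at least $k$, so as long as no vertex in a layer $N_l$ admits a non-tree edge to $N_{l-1}\cup N_l$ (call this a \emph{collision}), each such vertex contributes at least $k-1$ new children to the next layer. Iterating gives $|N_l|\ge (k-1)^l$ for every $l\ge 2$ (and already $|N_1|\ge k$). Since $(k-1)^L\ge n=|V(G)|$, a clean BFS cannot survive to depth $L$, so there is a collision at some depth $i\le L$, witnessed by an edge $ww'$ with $w\in N_i$ and $w'\in N_{i-1}\cup N_i$ that is not a tree edge.

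Next I would convert the collision into the desired path. Let $z$ be the lowest common ancestor of $w,w'$ in the BFS tree, at depth $j<i$. Because $u$ is a leaf, none of the tree-paths from $v$ to $z$, from $z$ to $w$, or from $w'$ back up towards $z$ passes through $u$ internally, so concatenating them yields the simple path
\[
p:\;u\to v\to\dots\to z\to\dots\to w\to w'\to\dots\to z',
\]
where $z'$ is the child of $z$ on $w'$'s BFS branch; then $z'\sim z$, so $p$ is self-colliding with $z$ playing the role of $v_i$ in the definition. Direct counting gives path length $1+i+i'-j\le 2L$, and the body of $p$ is a cycle of length $i+i'-2j+1$ attached to a tail of length $1+j$; since every body vertex can be reached from $u$ by traversing the tail and then the shorter side of the cycle, its distance along $p$ is at most $(1+j)+\tfrac12(i+i'-2j+1)\le L$.

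A few degenerate configurations will need separate treatment: if $u$ itself participates in the collision (only possible at depth $1$), it sits on a triangle through $v$, which directly gives $p=(u,v,w')$ of length two; and if $z=v$, the tail of $p$ is empty but the same counting still applies. The main obstacle I expect is not conceptual but the careful bookkeeping needed to keep both the length bound $2L$ and the body-vertex-distance bound $L$ tight across the two flavours of collision (same layer versus across layers) and the boundary cases $j=0$ and $j=i-1$ --- the underlying argument is standard Moore counting plus a lowest-common-ancestor walk.
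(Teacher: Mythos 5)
Your approach is the same Moore--bound counting the paper uses, but implemented through BFS layers from $v$ rather than by directly counting non-backtracking walks starting with $(u,v)$. The overall structure --- exponential growth of layers, pigeonhole forces a collision, unwind the collision into a self-colliding path via the lowest common ancestor --- is sound, and your BFS phrasing is a perfectly reasonable rewrite. Two of the ``bookkeeping'' concerns you flag at the end are, however, genuine gaps rather than mere tidying.

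First, the claim that $u$ can participate in a collision only at depth $1$ is false. Since $u$ sits in $N_1$ but is never expanded, $u$ may have a neighbour $w\in N_2$; then $(w,u)$ is a non-tree edge with $w\in N_2$ and $u\in N_1=N_{i-1}$, i.e.\ a collision at depth $2$ with $w'=u$. In that situation your LCA construction outputs $u\to v\to x\to w\to u$, which repeats $u$ and is not a simple path, so the construction as written breaks. The fix is easy and should be stated explicitly: whenever $u\in\{w,w'\}$, the tree path from $u$ down to the other endpoint, followed by the collision edge back to $u$, is already a self-colliding path with empty tail, so one simply stops there instead of continuing to $z'$.

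Second, the pigeonhole you run does not deliver the length bound $2L$. If the first collision is internal to $N_L$ with LCA $z=v$ (so $i=i'=L$, $j=0$), your formula $1+i+i'-j$ gives $2L+1$, and the inequality you assert fails. To recover $2L$ you should notice that the contradiction $|N_L|\ge(k-1)^L\ge n$ already follows from ``no collision at depth $\le L-1$ and no non-tree edge from $N_L$ into $N_{L-1}$'': edges internal to $N_L$ play no role in the counting. Hence you can always choose a collision with $i'\le L-1$, and then $1+i+i'-j\le 2L$. The same refinement is needed in your body--vertex distance estimate: $(1+j)+\tfrac12(i+i'-2j+1)$ simplifies to $\tfrac{3+i+i'}{2}$ (the $j$'s cancel), which for $i=i'=L$ is $L+\tfrac32$, not $\le L$; with the refined pigeonhole it drops to $L+1$. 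For what it is worth, the paper's own non-backtracking-walk argument also appears to yield $L+1$ here rather than the stated $L$ (vertices of the two walks lie at walk-distance at most $L$ from $v$, hence at most $L+1$ from $u$), so that particular constant seems to be off by one in the lemma's statement as well, and you should not burn effort trying to squeeze it down to $L$.
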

\begin{proof}
The number of non-backtracking walks of length $j+1$ starting with
the edge $(u,v)$ is $\left(k-1\right)^{j}$. Since the graph only
has $n$ vertices, there exist two distinct non-backtracking walks
of lengths $i+1$ and $j+1$ ending at the same vertex, where $i\le j\le\left\lceil \log_{k-1}n\right\rceil $.
Together, these walks form a (not necessarily simple) cycle of length
at most $2\left\lceil \log_{k-1}n\right\rceil $ passing through $v$.
Now take any simple subcycle of it to be $p$'s body and connect it
back to $u$ via a simple path.
\end{proof}
We now describe \textsc{Breaker}'s strategy. After \textsc{Maker}'s
first move, \textsc{Breaker} chooses arbitrarily one of the two vertices
\textsc{Maker} has just touched and denotes it by $u$. \textsc{Breaker}
then uses Lemma~\ref{lem:Moore-variant} to pick, for each neighbor
$v$ of $u$, a self-colliding path $p_{v}$ of length at most $2\left\lceil \log_{d-1}n\right\rceil $
beginning with the edge $\left(u,v\right)$. Note that the paths chosen
for two neighbors $v,v'$ are not necessarily disjoint.

Now \textsc{Breaker}'s strategy is to allow \textsc{Maker} to claim
only edges from $P=\cup\left\{ p_{v}:\left(u,v\right)\in E\right\} $;
this would limit the size of \textsc{Maker}'s connected component
to be at most $\left|P\right|\le2d\left\lceil \log_{d-1}n\right\rceil $.
\begin{prop}
\label{prop:counter-tree-strategy-works}In the $\left(1:d-2\right)$
game on $G$, if \textsc{Maker} follows the tree strategy, \textsc{Breaker}
is able to carry out the counter-strategy.\end{prop}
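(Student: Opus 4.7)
The plan is to show by induction on the move number that after each of \textsc{Breaker}'s moves the following invariant holds: (i) \textsc{Maker}'s tree $T$ is contained in $V(P)$, and (ii) every edge of $E(T,V\setminus T)$ not lying in $P$ has already been claimed by \textsc{Breaker}. Once the invariant is maintained, every free edge available to \textsc{Maker} for extending his tree lies in $P$, so $T\subseteq V(P)$ throughout the game, yielding $|T|\le |V(P)|\le 2d\lceil\log_{d-1}n\rceil$ as promised.

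The central structural observation is that every vertex $x\in V(P)\setminus\{u\}$ has degree at least $2$ in $P$. Indeed, $x$ lies on some self-colliding path $p_v=v_1v_2\cdots v_k$ with $v_1=u$, and every vertex other than $v_1$ has degree at least $2$ on $p_v$---either from two consecutive path edges, or, at the collision vertex $v_i$ and the endpoint $v_k$, with the help of the closing body edge $(v_k,v_i)$. Consequently, when \textsc{Maker} enters such an $x$ via a $P$-edge, at least one of the remaining $d-1$ edges of $x$ is also a $P$-edge, so at most $d-2$ of them fail to lie in $P$. In particular, at most $d-2$ new non-$P$ edges can appear in the boundary $E(T,V\setminus T)$ as a result of \textsc{Maker}'s move---exactly matching \textsc{Breaker}'s bias.

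For the base case, \textsc{Maker}'s first move is some edge $(a,b)$ and \textsc{Breaker} sets $u=a$: every edge incident to $u$ is automatically in $P$ (each is the first edge of some $p_w$), while $b\in V(P)\setminus\{u\}$ supplies at most $d-2$ non-$P$ edges among its $d-1$ remaining edges, all of which \textsc{Breaker} claims in his first move. For the inductive step, part (ii) of the invariant forces \textsc{Maker}'s next extension to be a $P$-edge $(y,x)$ with $x\in V(P)$; since $u\in T$ we have $x\neq u$, so the structural observation again bounds the new obligations for \textsc{Breaker} by $d-2$, which he handles with his $d-2$ steps. Any edges from $x$ that happen to land inside $T$ work in \textsc{Breaker}'s favor, as they vanish from the boundary without requiring a claim. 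The one subtlety is verifying the degree-in-$P$ lower bound at the special positions on a self-colliding path (the collision vertex $v_i$, the endpoint $v_k$, and the tail-end $v_{i-1}$, the last of which needs $i\ge 3$ to avoid collapsing to $u$); once this is in hand the induction is routine bookkeeping.
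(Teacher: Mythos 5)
Your proof is correct and takes essentially the same approach as the paper's: you maintain the invariant that all free boundary edges lie in $P$, and you exploit the fact that any newly entered vertex $x\neq u$ on a self-colliding path already has a second incident $P$-edge, so at most $d-2$ non-$P$ boundary edges appear per Maker move. The paper states the same argument more tersely (observing that $(v_{i-1},v_i)$ is claimed and $(v_i,v_{i+1})\in P$, implicitly handling the collision endpoint $v_k$ via the closing body edge), whereas you isolate the minimum-degree-$2$ observation and verify the corner cases explicitly; this is the same proof, just spelled out more carefully.
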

\begin{proof}
We show that \textsc{Breaker} can ensure that before every move of
\textsc{Maker}, the only free edges in $E\left(T,V\setminus T\right)$
are in $P$; thus, \textsc{Maker} must claim an edge of $P$, advancing
along some $p_{v}$. It is true at the beginning of the game as $T=\left\{ u\right\} $.
After \textsc{Maker} claims the edge $\left(v_{i-1},v_{i}\right)\in p_{v}$,
there are at most $d-2$ free edges incident to $v_{i}$ in $E\left(T,V\setminus T\right)\setminus P$,
since $\left(v_{i-1},v_{i}\right)$ has just been claimed and $\left(v_{i},v_{i+1}\right)\in P$.
\textsc{Breaker} can claim all of them (and, if necessary, some arbitrary
extra edges outside $P$). Thus, after getting a spanning tree $T\subset P$,
\textsc{Maker} forfeits.
\end{proof}
The counter-strategy is still effective when \textsc{Maker} builds
a forest with many trees, as long as one of the connected components
merged is always a single vertex; nevertheless, it breaks down when
\textsc{Maker} builds up many small trees and connects them one to
the other, avoiding getting to the collision at the end of the self-colliding
paths. \textsc{Breaker} could possibly deny a merge of two trees $T$
and $T'$ by forgoing the counter-strategy and claiming the free edge
between $T$ and $T'$, but this might let \textsc{Maker} escape from
the respective $P$ or $P'$.

\subsection{\label{sec:breaker-vs-general}Playing against any strategy}

We now describe a global strategy for \textsc{Breaker}, which copes
well with \textsc{Maker} merging connected components of any size.
Before starting the $\left(1:d-2\right)$ game, \textsc{Breaker} uses
Lemma~\ref{lem:short-orientation} to pick an orientation $D$ of
the graph $G$ such that every vertex has a positive out-degree and
all simple directed paths in $D$ are of length at most $d+\kappa_{d}\log n$.
Note that \textsc{Breaker} may as well reveal $D$ to \textsc{Maker}.

\bigskip{}

The strategy of \textsc{Breaker} goes as follows. Without loss of
generality we may assume that \textsc{Maker}'s strategy is always
to build a forest, since claiming an edge within a connected component
does not help \textsc{Maker}.%
\footnote{Formally, \textsc{Maker} claims edges inside his connected components
only when all remaining free edges are such; by this time, the outcome
of the game has already been determined.%
} Thus, on each move \textsc{Maker} merges two trees $T_{1}$ and $T_{2}$
to a single tree $T$ by claiming a free edge from $T_{1}$ to $T_{2}$.
\textsc{Breaker} then claims $d-2$ free edges according to the following
priorities:
\begin{enumerate}
\item $E\left(V\setminus T,T_{2}\right)$;
\item $E\left(V\setminus T,T_{1}\right)$;
\item $E\left(T,V\setminus T\right)$.
\end{enumerate}
In each step, \textsc{Breaker} claims an arbitrary free edge from
the set with the smallest index. If there is no free edge among these
sets, he just claims an arbitrary free edge.
\begin{claim}
\label{clm:directed-trees}Each tree $T$ in \textsc{Maker}'s graph
is a directed tree in $D$; that is, there is some $r\in T$ --- which
we denote by the \textit{root} of $T$ --- such that every vertex
in $T$ is reachable from $r$. Moreover, at the beginning of each
round (i.e., after \textsc{Breaker}'s move), no free edges enter $T\setminus\left\{ r\right\} $.\end{claim}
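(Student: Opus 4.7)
\medskip
\noindent\textbf{Proof plan.}

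I would prove the claim by induction on the round number, taking the claim itself as the invariant: at the start of every round, each Maker tree $T$ is a directed tree in $D$ rooted at some $r$, and no free edge in $D$ is directed from $V\setminus T$ into $T\setminus\{r\}$. The base case is vacuous, since initially every Maker tree is a singleton $T=\{v\}$ with $r=v$ and $T\setminus\{r\}=\emptyset$.

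For the inductive step, suppose the invariant holds at the start of some round and Maker then merges trees $T_1,T_2$ (with roots $r_1,r_2$) by claiming a free edge $e$. After relabeling if necessary, I may assume $e$ is oriented $u\to v$ in $D$ with $u\in T_1$ and $v\in T_2$. Since $u\in V\setminus T_2$ and $e$ is free, the invariant applied to $T_2$ forces $v=r_2$. The merged tree $T=T_1\cup T_2$ is therefore a directed tree rooted at $r:=r_1$: from $r_1$ one reaches every vertex of $T_1$ along its tree edges, then $r_2$ via $e$, and finally every vertex of $T_2$ along its tree edges.

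The crux is verifying that Breaker's response restores the second part of the invariant for $T$. Free directed edges from $V\setminus T$ into $T_1\setminus\{r_1\}$ or $T_2\setminus\{r_2\}$ are already precluded by the invariant applied to $T_1$ and $T_2$, using $V\setminus T\subseteq V\setminus T_i$, so Breaker only needs to kill the free directed edges from $V\setminus T$ into the former root $r_2$. This is the main obstacle, and it is handled by a tight counting argument: since every vertex has positive out-degree in $D$, the in-degree of $r_2$ in $D$ is at most $d-1$, and one of these in-edges, namely $e=(u,r_2)$, is sourced from $u\in T$ rather than $V\setminus T$; hence at most $d-2$ directed in-edges of $r_2$ come from outside $T$. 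These are precisely the free members of Breaker's priority-$1$ set $E(V\setminus T,T_2)$ (the remaining edges in that set, directed into $T_2\setminus\{r_2\}$, are non-free by the invariant), so Breaker's $d-2$ steps, played according to his priority order, suffice to eliminate them all. For any Maker tree $T'$ not involved in this round's merge, neither $T'$ nor $V\setminus T'$ changes, the claimed edge has both endpoints in $T_1\cup T_2$, and Breaker only removes free edges, so the invariant for $T'$ carries over unchanged. The whole argument hinges on the counting step above, which saves Breaker exactly one move and reflects why $b=d-2$ is the tight bias at which this strategy works.
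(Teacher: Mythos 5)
Your proof is correct and follows essentially the same route as the paper: induction on rounds, the observation that Maker can only enter $T_2$ through $r_2$, and the counting argument that the positive out-degree condition caps the in-degree of $r_2$ at $d-1$, of which one is the merge edge, leaving at most $d-2$ free in-edges from $V\setminus T$ for Breaker to kill with his priority-one step. You spell out a few details the paper leaves implicit (that the merged tree is rooted at $r_1$ by concatenating paths, that uninvolved trees are unaffected, that $V\setminus T\subseteq V\setminus T_i$ propagates the old invariant), which is fine.
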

\begin{proof}
The claim is trivially true at the beginning of the game, as the initial
connected components are single vertices, so every vertex is the root
and only member of its own directed tree. Suppose now that \textsc{Maker}
merged $T_{1}$ and $T_{2}$, two trees with roots $r_{1}$ and $r_{2}$,
respectively, by claiming an edge from $T_{1}$ to $T_{2}$. By our
assumption, before the merge the only free edges entering $T_{1}$
and $T_{2}$ were into $r_{1}$ and $r_{2}$, respectively. Hence,
\textsc{Maker} must have claimed an edge into $r_{2}$. Clearly, the
merged component is a directed tree, and all vertices in $T_{1}\cup T_{2}$
are now reachable from $r_{1}$, which becomes the root of the new
tree. Furthermore, the in-degree of every vertex in $D$, and in particular
of $r_{2}$, is at most $d-1$, so \textsc{Breaker}'s preference towards
$E\left(V\setminus T,T_{2}\right)$ ensures that all the edges entering
$r_{2}$ are claimed after \textsc{Breaker}'s move (one by the merge
and all the rest by \textsc{Breaker}), and so all the free edges entering
the new tree enter its root.
\end{proof}
It is beneficial to classify \textsc{Maker}'s trees by the number
of free in-edges.
\begin{defn*}
\label{def:type}The \emph{type }of a tree $T$ in \textsc{Maker}'s
graph is the number of free edges in $E\left(V\setminus T,T\right)$.
\end{defn*}
By Claim~\ref{clm:directed-trees}, the type of a tree is bounded
by the in-degree of its root, so the possible types are $0,1,\ldots,d-1$.
Claim~\ref{clm:directed-trees} also enables us to partially order
the vertices in each tree, giving rise to the following definition:
\begin{defn*}
\label{def:height}Let $T$ be a tree in \textsc{Maker}'s graph. The
\emph{height} of a vertex $v\in T$, denoted by $h\left(v\right)$,
is the length of the (unique) path $r\leadsto v$ in $T$, where $r$
is the root of $T$; the height of an edge $\left(u,v\right)\in E\left(T,V\setminus T\right)$
is $h\left(u,v\right)=h\left(u\right)$; the height of $T$, denoted
$h\left(T\right)$, is the maximum height over all $v\in T$.
\end{defn*}
We wish to bound the size of \textsc{Maker}'s trees. By the choice
of $D$, we know that the trees are not too {}``high'', but we also
need to ensure they do not become too {}``wide''.

For this, we refine \textsc{Breaker}'s strategy a bit. In the tree
$T$ just created by \textsc{Maker}, \textsc{Breaker} claims in-edges
from highest to lowest, and then out-edges from lowest to highest.
In more detail, in each step \textsc{Breaker} claims an incoming free
edge $\left(x,y\right)\in E\left(V\setminus T,T\right)$ such that
$h\left(y\right)$ is maximal, if possible; otherwise he claims an
outgoing free edge $\left(x,y\right)\in E\left(T,V\setminus T\right)$
such that $h\left(x,y\right)=h\left(x\right)$ is minimal. In both
cases, ties are broken arbitrarily.

\textsc{Breaker}'s preference of claiming low out-edges gives the
following:
\begin{claim}
\label{clm:out-arcs-top-to-bottom}Let $T$ be a tree in \textsc{Maker}'s
graph. If the edge $e\in E\left(T,V\setminus T\right)$ was claimed
by \textsc{Breaker}, then $h\left(e'\right)\ge h\left(e\right)$ for
every free edge $e'\in E\left(T,V\setminus T\right)$.\end{claim}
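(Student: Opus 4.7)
I would prove the claim by induction on Breaker's steps, maintaining as inductive hypothesis the very statement of the claim for every tree currently in Maker's graph. The base case (no edges claimed) is vacuous. There are two kinds of inductive steps: Breaker's step and Maker's step.

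\emph{Breaker's step.} Suppose that during a step, Breaker claims an out-edge $e=(u,w)$ of the tree $T$ he is currently playing against. By the refined strategy, Breaker claims out-edges only after no free in-edges of $T$ remain, and among free out-edges he picks one of minimum height. Therefore $e$ has the minimum height among all free out-edges of $T$ at that moment, so $h(e')\ge h(e)$ for every other currently free out-edge $e'$. For any out-edge $e''$ of $T$ claimed by Breaker in an earlier step, the inductive hypothesis applied to the previous step gives $h(e'')\le h(e)$ (since $e$ was itself free until this step), and hence $h(e'')\le h(e')$ as well. This keeps the invariant after the step.

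\emph{Maker's step.} When Maker merges $T_1$ and $T_2$ into $T$ by claiming the free edge $(v_1,r_2)$ with $v_1\in T_1$, the heights of vertices of $T_1$ are preserved in $T$, while heights of vertices of $T_2$ are uniformly shifted up by $s:=h_{T_1}(v_1)+1$. I would verify the invariant for $T$ by propagating the invariants for $T_1$ and $T_2$ through this shift: the key observation is that $(v_1,r_2)$ itself being a free out-edge of $T_1$ at height $s-1$, together with the $T_1$-invariant, forces every Breaker-claimed out-edge of $T_1$ to have height at most $s-1$; symmetrically, every Breaker-claimed out-edge of $T_2$ has height (in $T_2$) at most the height (in $T_2$) of any free out-edge of $T_2$, and the uniform shift preserves this after passing to $T$.

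\emph{Main obstacle.} The delicate part is the cross-inequality in the Maker step: a Breaker-claimed out-edge of $T_2$ whose height has been shifted up by $s$ might, a priori, exceed the height in $T$ of a $T_1$-side free out-edge, since $H_F(T_1)\le s-1$ whereas shifted $T_2$-side claimed heights can lie in $[s,\, s+H_B(T_2)]$. The resolution is that Breaker's immediate response in the same round (claiming low-height free out-edges of $T$ after dealing with the in-edges of $r_2$ forced by Claim~\ref{clm:directed-trees}) fills exactly the $T_1$-side gap; a careful accounting, using that the in-degree of $r_2$ in $D$ is at most $d-1$ and that by induction the low-height out-edges of $T_1$ and $T_2$ were almost entirely claimed already, shows that the $d-2$ steps available to Breaker suffice to restore the full invariant for $T$.
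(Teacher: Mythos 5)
Your induction-on-steps framework is sound and the Breaker-step case is handled correctly, but the Maker-step case has a genuine gap: you correctly identified the dangerous cross-inequality (a Breaker-claimed out-edge of $T_2$, shifted up to height $\ge s$, potentially dominating a free out-edge of $T_1$ at height $<s$), yet the resolution you propose does not work. If Maker's merge leaves $d-2$ free in-edges into $r_2$, Breaker's entire move is consumed by priority (1) in-edges and he claims no out-edges at all, so there is no ``gap-filling'' available; moreover, even when Breaker has steps to spare, no number of new claims can retroactively lower the height of an already-claimed high out-edge of $T_2$. The situation you fear would simply be a genuine violation of the invariant that Breaker cannot repair.

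The actual resolution, which is the heart of the paper's proof, is that this situation never arises: by Breaker's priorities he claims an out-edge of a tree $T$ only when $T$ has no free in-edges, i.e.\ $T$ is of type zero; and once a tree is of type zero it stays of type zero forever (its root never changes, because with no free in-edges Maker can only merge it as the top tree, and the in-edges to that root can only be claimed, never freed). Since $T_2$ has the free in-edge $(v_1,r_2)$ at the moment of the merge, $T_2$ has positive type, so it has never been of type zero, so Breaker has never claimed any out-edge of $T_2$. Thus the set of shifted $T_2$-side Breaker-claimed out-edges is empty and the cross-inequality is vacuous. Note also that the paper does not run an explicit step-by-step induction: it first proves this ``once type zero, always type zero'' observation and then argues directly from the moment $e$ was claimed, tracking how $E(T,V\setminus T)$ evolves. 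Your inductive scaffolding is a viable alternative route, but it needs this missing observation to close the Maker step; without it the proof is incomplete.
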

\begin{proof}
Note first that if \textsc{Breaker} has claimed an edge $(u,v)\in E(T,V\setminus T)$
for some tree $T$, then from that point until the end of the game
$u$ will only belong to trees of type zero. Indeed, according to
his strategy, \textsc{Breaker} has claimed $(u,v)$ only since there
were no free edges entering $T$, so at that point $T$ is of type
zero. Furthermore, by Claim~\ref{clm:directed-trees} we have that
at any point until the end of the game $u$ will only belong to trees
rooted at $T$'s root, implying that they will be of type zero as
well. Therefore, Claim~\ref{clm:out-arcs-top-to-bottom} trivially
holds when the type of $T$ is positive, since that implies that \textsc{Breaker}
has claimed only edges entering $T$. We thus assume $T$ is of type
zero.

At the moment \textsc{Breaker} claims $e$, there is no edge lower
than $e$ among all edges in $E\left(T,V\setminus T\right)$. In subsequent
rounds, the only changes to $E\left(T,V\setminus T\right)$ (and to
$T$) are when \textsc{Maker} claims some edge $e'$ from $T$ to
another tree $T'$. The height of all vertices of $T'$ in the merged
tree, and thus also of all new edges in $E\left(T,V\setminus T\right)$,
is at least $h\left(e'\right)+1>h\left(e\right)$.
\end{proof}
Recall that in the counter-strategy to the tree strategy, \textsc{Breaker}
only allowed \textsc{Maker} to pursue self-colliding paths, so \textsc{Maker}'s
final component consisted of $d$ paths $p_{v}$ sharing a root vertex.
Here, similarly, \textsc{Breaker}'s strategy allows \textsc{Maker}
to extend every free edge in $E\left(T,V\setminus T\right)$ to a
directed path. This motivates the following definition of width:
\begin{defn*}
\label{def:width}Let $T$ be a tree in \textsc{Maker}'s graph. The
\emph{$i$-width }of $T$, denoted $w_{i}\left(T\right)$, is the
number of vertices in $T$ of height $i$ plus the number of free
edges in $E\left(T,V\setminus T\right)$ of height strictly smaller
than $i$. The width of $T$, denoted $w\left(T\right)$, is the maximum
$i$-width in $T$, taken over $i=0,1,\ldots,h\left(T\right)$.
\end{defn*}
We are ready to prove the following proposition, which implies Theorem~\ref{thm:breaker-win}
since $\left|T\right|\le1+h\left(T\right)\cdot w\left(T\right)$.
\begin{prop}
\label{prop:width-bound}Let $T$ be a tree of type $t$ in \textsc{Maker}'s
graph. Then, 
\[
w\left(T\right)\le\begin{cases}
d-t, & 1\le t\le d-1;\\
2d-2, & t=0.
\end{cases}
\]
\end{prop}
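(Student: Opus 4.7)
My plan is induction on the round number, maintaining the invariant that every tree $T$ of type $t$ present in Maker's graph satisfies $w_i(T) \leq f(t)$ for every $i \geq 0$, where $f(t) = d - t$ for $1 \leq t \leq d - 1$ and $f(0) = 2d - 2$; passing to the maximum over $i$ yields the proposition. The base case is straightforward: each tree starts as a singleton $\{v\}$ with type $t_v = \mathrm{indeg}_D(v) \leq d - 1$, for which $w_0(\{v\}) = 1$ and $w_i(\{v\}) = d - t_v$ for $i \geq 1$.

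For the inductive step, consider the round in which Maker merges $T_1$ (type $t_1'$, root $r_1$) with $T_2$ (type $t_2'$, root $r_2$) by claiming a free edge $(u,r_2)\in D$ with $u\in T_1$ at height $j$. By Claim~\ref{clm:directed-trees}, $r_2$ must be the root of $T_2$, so in the merged tree $T$ rooted at $r_1$, each vertex of $T_2$ at $T_2$-height $\ell$ sits at $T$-height $\ell + j + 1$. Let $\alpha_0 \geq 1$ be the number of free edges from $T_1$ to $r_2$ (of which Maker's merge edge is one) and $\beta_0 \geq 0$ the symmetric count from $T_2$ to $r_1$, so that $\sigma = t_1' + t_2' - \alpha_0 - \beta_0$ is the number of free in-edges of $T$ after Maker's move. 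Breaker's refined priority rule---in-edges in order of decreasing $h(y)$, then out-edges in order of increasing $h(x)$---then yields post-Breaker type $t = \max(0,\,\sigma - (d-2))$ and causes $B = (d-2) - \min(\sigma, d-2)$ out-edges to be killed from the lowest heights upward.

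The case $i\leq j$ is immediate: no vertex of $T_2$ appears at $T$-height $i$ and no new free out-edge of $T$ arises at heights below $i$, so $w_i(T)\leq w_i(T_1)\leq f(t_1')$; a brief arithmetic check using $\alpha_0 \geq 1$ and $t_2' \leq d-1$ gives $t \leq t_1'$, whence $w_i(T)\leq f(t)$ by monotonicity of $f$. For the harder case $i\geq j+1$, direct enumeration shows that the intermediate $i$-width right after Maker's move equals $\tilde w_i(T) = w_i(T_1) + w_{i-j-1}(T_2) - X_{<i} - Y_{<i-j-1}$, where $X_{<i}$ and $Y_{<i-j-1}$ count the free $T_1$-to-$T_2$ (respectively $T_2$-to-$T_1$) edges at the relevant heights that become internal; in particular $X_{<i}\geq 1$. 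Since Breaker spends his out-edge budget on the lowest heights, the post-Breaker width satisfies $w_i(T)\leq \max\{n_i(T),\;\tilde w_i(T) - B\}$.

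The main obstacle is the bound $n_i(T) \leq f(t)$. Using $\sum_{k<i} f_k(T_1) \geq f_j(T_1) \geq 1$, one gets $n_i(T_1) \leq f(t_1') - 1$, and $n_{i-j-1}(T_2) \leq f(t_2')$ is immediate; these suffice in every subcase except $t = 0$ with $t_1' = 0$, where the naive sum $(2d-3) + (d - t_2')$ exceeds $2d-2$ for small $t_2'$. I would close this gap by strengthening the inductive invariant for type-$0$ trees with a sharper branching bound $n_\ell(T_1)\leq d-1$ at each level $\ell\geq 1$, which is maintained because Breaker's $d-2$ moves per round, having no in-edges to target when $t_1' = 0$, are forced to kill out-edges and so restrict each level's branching the same way that $w_1(T) \leq d - \mathrm{indeg}_D(r) \leq d - t$ restricts the root's branching. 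The complementary inequality $\tilde w_i(T) - B \leq f(t)$ reduces to an arithmetic verification using $X_{<i}\geq 1$ and the explicit formulas for $t$ and $B$, completing the induction.
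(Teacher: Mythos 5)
Your proof follows the same induction-on-rounds skeleton as the paper, and your treatment of the $t_1 \ge 1$ cases is essentially the paper's own. The gap is exactly where you flag it ($t_1 = 0$, $t = 0$), but the patch you propose does not work. The strengthened invariant you want --- $n_\ell(T) \le d-1$ at every level $\ell \ge 1$ of every type-$0$ tree $T$ --- is not preserved by the very merge operation you are analyzing. Take a type-$0$ tree $T_1$ with $n_i(T_1) = d-1$ at some level $i$, which your strengthened invariant allows, and a type-$1$ tree $T_2$ with $n_1(T_2) = d-1$ (allowed, since $w(T_2) \le d-1$; for instance $T_2$ is a root with $d-1$ children and no free out-edges at height $0$). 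If Maker merges them by claiming an out-edge of $T_1$ at height $i-2$, the resulting tree --- still of type $0$ --- has $n_i = 2d-2 > d-1$, breaking the strengthened invariant. Your heuristic for why it would hold (Breaker's $d-2$ steps ``restrict each level's branching'') is also untenable: Breaker claiming out-edges cannot retroactively reduce the number of vertices already present at a given level.

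The paper closes this case not by strengthening the invariant, but by proving a \emph{conditional} bound $w_i(T_1) \le d$ that holds only for the specific heights $i > h(u)$ relevant after a merge at height $h(u)$, and only because the merge edge $(u,v)$ is a free out-edge at a strictly lower height. The mechanism is a leaf/out-degree counting argument: for each of the $|U|$ vertices of $T_1$ at height $i$, pick a distinct leaf descendant $x'$ and an out-edge $(x',y)\in E(D)$ of $x'$, which exists because $D$ has positive out-degrees --- a second, independent use of Lemma~\ref{lem:short-orientation} beyond bounding $h(T)$. By Claim~\ref{clm:out-arcs-top-to-bottom}, since $(u,v)$ is free at height $h(u)<i$, Breaker has claimed no out-edge of $T_1$ at height $\ge i$, so each $(x',y)$ is a free edge incident to $T_1$, distinct from the $|A|$ free out-edges of height $<i$. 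Claim~\ref{clm:reactive} then caps the number of free edges incident to $T_1$ at $d$, giving $w_i(T_1)=|U|+|A|\le d$ and hence $w_i(T)\le d+(d-1)-1=2d-2$. Both ingredients --- the positive out-degree of $D$ and the reactive-strategy bound from Claim~\ref{clm:reactive} --- are essential and absent from your proposed fix.
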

\begin{proof}
We prove this by induction on the number of rounds in the game. The
proposition holds for trivial trees. Assume \textsc{Maker} merges
trees $T_{1}$ and $T_{2}$ of types $t_{1}$ and $t_{2}$, respectively,
by claiming the edge $\left(u,v\right)$, where $v$ is the root of
$T_{2}$. Then, the merged tree $T$ has type $t=\max\left(0,t_{1}+t_{2}-d+1\right)$
after \textsc{Breaker}'s move. Note that necessarily $t_{2}>0$.

The vertices of $T_{1}$ maintain their height in $T$; vertices that
had height $j$ in $T_{2}$, now have height $h\left(u\right)+1+j$
in $T$. For $i\le h\left(u\right)$, we have $w_{i}\left(T\right)=w_{i}\left(T_{1}\right)\le w\left(T_{1}\right)$;
for $i>h\left(v\right)$, the now-claimed edge $\left(u,v\right)$
no longer counts for the $i$-width of $T$, so
\begin{equation}
w_{i}\left(T\right)=w_{i}\left(T_{1}\right)-1+w_{i-h\left(u\right)-1}\left(T_{2}\right)\le w\left(T_{1}\right)+w\left(T_{2}\right)-1.\label{eq:level-width}
\end{equation}

If $t_{1}>0$ then, by the induction hypothesis, $w\left(T_{1}\right)\le d-t_{1}$
and $w\left(T_{2}\right)\le d-t_{2}$, so 
\[
w\left(T\right)\le w\left(T_{1}\right)+w\left(T_{2}\right)-1\le d-t_{1}+d-t_{2}-1=d-t.
\]

If $t_{1}=0$ then $t=0$ too; by the induction hypothesis, $w\left(T_{1}\right)\le2d-2$
and $w\left(T_{2}\right)\le d-t_{2}\le d-1$. For $i\le h\left(u\right)$,
as before, we have $w_{i}\left(T\right)\le w\left(T_{1}\right)\le2d-2$;
for $i>h\left(u\right)$, assuming we show that $w_{i}\left(T_{1}\right)\le d$,
the same calculation as in~(\ref{eq:level-width}) yields $w_{i}\left(T\right)\le w_{i}\left(T_{1}\right)+w\left(T_{2}\right)-1\le d+\left(d-1\right)-1=2d-2$.

By the definition of $w_{i}\left(T_{1}\right)$, there exist a set
$U\subseteq T_{1}$ of vertices of height $i$ and a set $A\subseteq E\left(T,V\setminus T\right)$
of free edges of height less than $i$ such that $w_{i}\left(T_{1}\right)=\left|U\right|+\left|A\right|$.
For every vertex $x\in U$, pick a leaf $x'\in T_{1}$ reachable (in
$T_{1}$) from $x$. The out-degree of $x'$ in $D$ is positive,
so pick some edge $e=\left(x',y\right)\in E\left(D\right)$. If $y\in T_{1}$,
no one will ever claim $e$; otherwise, $e\in E\left(T,V\setminus T\right)$
so \textsc{Maker} has not yet claimed it. By Claim~\ref{clm:out-arcs-top-to-bottom},
neither did \textsc{Breaker} since $h\left(e\right)=h\left(x'\right)\ge h\left(x\right)=i>h\left(u\right)$
and $\left(u,v\right)\in E\left(T_{1},V\setminus T_{1}\right)$ was
free before \textsc{Maker}'s move. Altogether, we have a set $A'$
of $\left|A'\right|=\left|U\right|$ free edges coming out of $T_{1}$,
disjoint from $A$ since edge heights in $A'$ are all at least $i$.
By Claim~\ref{clm:reactive}, $T_{1}$ is incident to at most $d$
free edges, so $w_{i}\left(T_{1}\right)=\left|A\right|+\left|A'\right|=\left|A\cup A'\right|\le d$,
establishing the proposition.\end{proof}
\begin{rem*}
In the previous subsection, using the counter-strategy to the tree
strategy, \textsc{Breaker} could bound $w\left(T\right)$ by ensuring
that, besides a single vertex of degree $d$, the degrees of all vertices
in $T$ were at most two. With the strategy presented in this subsection,
\textsc{Breaker} cannot limit $w\left(T\right)$ by bounding the number
of forks in $T$, i.e., the number of vertices of out-degree at least
2. Indeed, already for $d=3$, there exists a positive out-degree
orientation $D$ of a cubic graph $G$ and a strategy for \textsc{Maker}
to build a tree $T$ with $\Omega\left(h\left(T\right)\right)$ forks
in a $\left(1:1\right)$ game on $G$.
\end{rem*}

\section{\label{sec:orientation}Short graph orientations}

In this section we discuss and prove Lemma~\ref{lem:short-orientation}.
We begin by introducing the following notation:
\begin{defn*}
\label{def:max-length}For a directed graph $D$, we denote by $l\left(D\right)$
the maximal length of a simple directed path in $D$. For an undirected
graph $G$ and $j\in\left\{ 0,1\right\} $, we denote by $l_{j}\left(G\right)$
the minimum of $l\left(D\right)$ over all orientations $D$ of $G$
such that every vertex has out-degree at least $j$.
\end{defn*}
The case $j=0$, that is, when we drop the positive out-degrees requirement,
was considered by (at least) four independent works.
\begin{thm}[Gallai~\cite{Gallai-68}--Hasse~\cite{Hasse-65}--Roy~\cite{Roy-67}--Vitaver~\cite{Vitaver-62}]
\label{thm:gallai-roy}For every graph $G$, $l_{0}\left(G\right)=\chi\left(G\right)$.
\end{thm}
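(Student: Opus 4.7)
The plan is to establish both inequalities $l_{0}\left(G\right)\le\chi\left(G\right)$ and $l_{0}\left(G\right)\ge\chi\left(G\right)$ separately, which together give equality. Throughout I will interpret the length of a simple path as the number of its vertices, in accordance with the statement.

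For the upper bound, I would fix a proper coloring $c:V\left(G\right)\to\left\{ 1,2,\ldots,\chi\left(G\right)\right\} $ and define an orientation $D$ by directing each edge $\left\{ u,v\right\} $ from the endpoint of smaller $c$-value to the endpoint of larger $c$-value (ties cannot occur since $c$ is proper). Any simple directed path $v_{1}\to v_{2}\to\cdots\to v_{k}$ in $D$ then satisfies $c\left(v_{1}\right)<c\left(v_{2}\right)<\cdots<c\left(v_{k}\right)$, so $k\le\chi\left(G\right)$, giving $l\left(D\right)\le\chi\left(G\right)$ and hence $l_{0}\left(G\right)\le\chi\left(G\right)$.

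For the lower bound, I would start from an arbitrary orientation $D$ of $G$ and extract a spanning subdigraph $D'\subseteq D$ that is acyclic and \emph{maximal} with respect to this property — i.e. adding any arc of $D$ not already in $D'$ would create a directed cycle in $D'$. Such a $D'$ exists by a greedy argument (delete arcs from directed cycles until no cycles remain). Since $D'$ is a DAG, one can define, for every vertex $v$, the quantity $c\left(v\right)$ as the maximum number of vertices in a simple directed path of $D'$ that ends at $v$. I would then verify that $c$ is a proper coloring of $G$: for any arc $\left(u,v\right)\in D$, either $\left(u,v\right)\in D'$, in which case prepending $\left(u,v\right)$ to a longest path ending at $u$ shows $c\left(v\right)\ge c\left(u\right)+1>c\left(u\right)$; or $\left(u,v\right)\notin D'$, in which case maximality implies $D'$ contains a directed path from $v$ to $u$, which extended by a longest path ending at $v$ gives $c\left(u\right)\ge c\left(v\right)+1>c\left(v\right)$. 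In both cases $c\left(u\right)\ne c\left(v\right)$, so $c$ uses at most $l\left(D'\right)\le l\left(D\right)$ distinct values, whence $\chi\left(G\right)\le l\left(D\right)$. Since $D$ was arbitrary, $\chi\left(G\right)\le l_{0}\left(G\right)$.

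The only delicate point, and the one worth double-checking, is the second case in the properness argument: one must know that the path from $v$ to $u$ obtained via maximality is internally vertex-disjoint from some longest path ending at $v$, so that their concatenation remains a \emph{simple} directed path in $D'$. This is automatic because $D'$ is acyclic: if any internal vertex of the $v\leadsto u$ path coincided with a vertex of the path ending at $v$, one would obtain a directed cycle in $D'$, a contradiction. Beyond that, both halves are straightforward and no sophisticated machinery is required.
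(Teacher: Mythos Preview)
Your proof is correct. The paper itself only sketches the easy inequality $l_{0}(G)\le\chi(G)$ (which is all it needs later), orienting each edge from the higher-colored endpoint to the lower-colored one; your upper bound is the same argument with the orientation reversed, which is immaterial. For the reverse inequality the paper simply cites the original sources, whereas you supply the standard maximal-acyclic-subdigraph argument in full.

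Two minor wording points, neither affecting correctness: in the case $(u,v)\in D'$ you write ``prepending $(u,v)$ to a longest path ending at $u$'' where you mean \emph{appending}; and in the disjointness check you speak of ``internal'' vertices of the $v\leadsto u$ path, but the argument must (and does) also exclude $u$ itself from lying on the longest path ending at $v$---the same acyclicity reasoning covers it.
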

We mention here only the easy side of the proof, which will be used
shortly. To see that $l_{0}\left(G\right)\le\chi\left(G\right)$,
color $G$ properly by the colors $\left\{ 1,2,\ldots,\chi\left(G\right)\right\} $
and orient each edge $\left\{ u,v\right\} $ from $u$ to $v$ iff
$u$'s color is greater than $v$'s.

\medskip{}
Returning to the case $j=1$, we cannot expect an orientation $D$
with positive out-degrees for which $l\left(D\right)$ is independent
of $n$. Indeed, when every vertex has a positive out-degree, $D$
surely contains a directed cycle, so $l_{1}\left(G\right)\ge g\left(G\right)$.
Known constructions of $d$-regular graphs of high girth (see, e.g.,~\cite{Bollobas-78,ES-63,LPS-88})
yield families of graphs of order $n$, chromatic number $\Omega\left(d/\log d\right)$
and girth $\Omega\left(\log_{d-1}n\right)$. Thus, our best hope would
be to show that $l_{1}\left(G\right)=O\left(\log n\right)$.

\medskip{}
The main idea of the proof that follows is this: we find in $G$ a
set of disjoint short cycles, which we orient cyclically, and we orient
the rest of the edges {}``towards'' the cycles. Lemma~\ref{lem:Moore-variant}
will assist us in showing that simple directed paths outside the cycles
are necessarily short.
\begin{proof}[Proof of Lemma~\ref{lem:short-orientation}]
Fix $k=\max\left(3,\left\lceil \log\delta/\log\log\delta\right\rceil \right)$
and set $\gamma_{\delta}=\left\lceil \log_{\delta-1}n\right\rceil ,\gamma_{k}=\left\lceil \log_{k-1}n\right\rceil $.

Let $\mathcal{C}$ be a maximal collection of nonadjacent induced
cycles of length at most $2\gamma_{k}$. That is, we begin with an
empty collection $\mathcal{C}=\varnothing$ and, as long as there
exists an induced cycle $C$ in $G$ of length $\left|C\right|\le2\gamma_{k}$
whose vertices have no neighbors among $V_{\mathcal{C}}$, the vertices
of cycles in $\mathcal{C}$, we add $C$ to $\mathcal{C}$. Note that
$\mathcal{C}$ is nonempty since the girth of $G$ is at most $2\gamma_{\delta}\le2\gamma_{k}$,
by Lemma~\ref{lem:Moore-variant} (or the Moore bound).

Fix any cyclic orientation of the cycles in $\mathcal{C}$, and orient
the edges of $E\left(V_{\mathcal{C}},V\setminus V_{\mathcal{C}}\right)$
into $\mathcal{C}$. All edges incident to $\mathcal{C}$ are thus
oriented, as the cycles in $\mathcal{C}$ are induced and nonadjacent.
Since no edges are leaving any cycle in $\mathcal{C}$, once we orient
the rest of the graph, any simple directed path can contain at most
$2\gamma_{k}$ vertices of $V_{\mathcal{C}}$, which form its suffix.

\medskip{}

We now fuse all the vertices of cycles in $\mathcal{C}$ to a single
vertex $s$. Let $G'=\left(V',E'\right)$ be the resulting graph;
make it simple by discarding loops and parallel edges incident to
$s$. 

For every vertex $v\in V'$ we denote its distance from $s$ by $\rho\left(v\right)$.
We claim that $\rho\left(v\right)\le1+\gamma_{\delta}$; indeed, $v$
is within distance $\gamma_{\delta}$ of some short cycle $C$ by
Lemma~\ref{lem:Moore-variant} (specifically, $v$ is within distance
$\gamma_{\delta}$ of any vertex on $C$), and $C$ either intersects
some cycle in $\mathcal{C}$, is adjacent to some cycle in $\mathcal{C}$,
or simply $C\in\mathcal{C}$, by the maximality of $\mathcal{C}$.

For $i\in\left\{ 1,2,\ldots,1+\gamma_{\delta}\right\} $, consider
the level set $V_{i}'=\left\{ v\in V':\rho\left(v\right)=i\right\} $
and the subgraph $G_{i}\subset G'$ it induces. As in the proof of
Theorem~\ref{thm:gallai-roy}, we orient edges between $V_{i}'$
and $V'_{i+1}$ {}``downwards'' (i.e., from $V'_{i+1}$ to $V'_{i}$).
This ensures all vertices except $s$ have a positive out-degree,
via shortest paths to $s$.

By definition, every edge either lies inside a level set or connects
two successive level sets. Therefore, it only remains to orient edges
between same height vertices, which will be done using Theorem~\ref{thm:gallai-roy}.
For $G_{1}$ we have $l_{0}\left(G_{1}\right)=\chi\left(G_{1}\right)\le\chi\left(G\right)$
since $G_{1}\subset G$. By the maximality of $\mathcal{C}$, for
all $i>1$, $G_{i}$ has no cycle of length at most $2\gamma_{k}$.
Apply Lemma~\ref{lem:Moore-variant} to deduce that $G_{i}$ cannot
have a subgraph with minimum degree $k$; in other words, $G_{i}$
is $\left(k-1\right)$-degenerate, and, in particular, $k$-colorable.

Altogether, we have an orientation $D'$ of $G'$ satisfying 
\[
l\left(D'\right)\le\sum_{i=1}^{1+\gamma_{\delta}}l_{0}\left(G_{i}\right)=\sum_{i=1}^{1+\gamma_{\delta}}\chi\left(G_{i}\right)\le\chi\left(G\right)+k\gamma_{\delta};
\]
combined with the orientation of edges incident to $\mathcal{C}$
defined above, we get an orientation $D$ of $G$ with positive out-degrees
and $l\left(D\right)\le\chi\left(G\right)+k\gamma_{\delta}+2\gamma_{k}$.
Therefore, 
\[
l_{1}\left(G\right)\le l\left(D\right)=\chi\left(G\right)+O\left(\log n/\log\log\delta\right),
\]
as the lemma states.
\end{proof}

\section{\label{sec:conclusion}Concluding remarks and open problems}

\paragraph{Component games on other graphs.}

For the sake of simplicity, we presented our results in this paper
only for regular graphs, but these stay put under the alternative
definition 
\[
s_{b}^{*}\left(n,d\right)=\max\left\{ s_{b}^{*}\left(G\right):\mbox{\ensuremath{G} is a graph on \ensuremath{n} vertices and \ensuremath{\mbox{\ensuremath{\Delta\left(G\right)\le}}d}}\right\} .
\]
It would be interesting to consider the component game on families
of graphs of unbounded maximum degree. For instance, the Maker--Breaker
component game on $\mathcal{G}_{n,p}$ is considered by~\cite{KM-*}.

\paragraph{Doubly-biased games.}

As shown in Remark~\ref{rem:doubly-biased-maker}, Theorem~\ref{thm:maker-win}
can be easily extended to the doubly-biased game $\left(m:b\right)$
for $b/m<\left(d-2\right)$; similarly, Remark~\ref{rem:doubly-biased-breaker}
extends Proposition~\ref{prop:breaker-trivial-win} to the $\left(m:b\right)$
game for $b/m>\left(d-2\right)$. However, the strategy presented
in Subsection~\ref{sec:breaker-vs-general} is inadequate in the
$\left(m:\left(d-2\right)m\right)$ game. Indeed, already for $m=2$,
there exists a positive out-degree orientation $D$ of a $d$-regular
graph $G$ and a strategy for \textsc{Maker} to build a connected
component $S$ of width $\Omega\left(d^{h\left(S\right)}\right)$
in a $\left(2:2d-4\right)$ game on $G$. The key step in \textsc{Maker}'s
strategy is to merge connected components by claiming two out-edges
entering the same vertex, nullifying Claims~\ref{clm:directed-trees}
and~\ref{clm:out-arcs-top-to-bottom}, and thus Proposition~\ref{prop:width-bound}
no longer holds.

We believe that not all hope is lost for \textsc{Breaker}.
\begin{conjecture}
\label{conj:doubly-biased-breaker-win}Let $G$ be a $d$-regular
graph on $n$ vertices, where $d\ge3$, and let $m$ be a positive
integer. Then, in the $\left(m:\left(d-2\right)m\right)$ game on
$G$, \textsc{Breaker} can force \textsc{Maker} to build only connected
components of size $o\left(n\right)$, perhaps polylogarithmic (or
even logarithmic) in $n$.
\end{conjecture}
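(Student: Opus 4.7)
The plan is to lift the three-step framework of Section~\ref{sec:breaker-strategy} from $m=1$ to arbitrary $m$: a short-path orientation, a reactive priority-based strategy, and a height-times-width bound on Maker's components. The first two steps should generalize naturally; the genuine difficulty, flagged in the remark preceding the conjecture, is that Maker can in a single move drop several in-arcs onto one vertex and thereby shatter the directed-tree invariant that drives the width bound.

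\textbf{Step 1 (A stronger orientation).} I would first prove a strengthening of Lemma~\ref{lem:short-orientation}: for every $d$-regular graph $G$ with $d \ge 2m+1$ there exists an orientation $D$ of $G$ in which every vertex has out-degree at least $m$ and $l(D) \le \chi(G) + O(m\log n/\log\log d)$. Existence of an orientation with minimum out-degree $m$ is itself easy (the Hakimi-type inequality $|E(G[S])| \le (d-m)|S|$ holds automatically when $d \ge 2m$); to keep all simple directed paths short one mimics the proof in Section~\ref{sec:orientation}, for instance by packing $m$ edge-disjoint maximal families of short induced vertex-disjoint cycles and layering the rest of the vertices toward each, or by peeling off $m$ edge-disjoint spanning subgraphs of minimum degree $2$ via a Petersen-style decomposition and orienting each inward.

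\textbf{Step 2 (Reactive baseline and priority).} The $(m:(d-2)m)$-analog of Claim~\ref{clm:reactive} goes through essentially verbatim and yields that a reactive Breaker keeps at most $d$ free edges incident to every Maker component $S$: when Maker merges $k \le m+1$ components in one move, the inductive incident-edge total $\le kd$ drops to $\le kd - 2m$ after Maker's claims and to $\le (k-m)d \le d$ after Breaker's $(d-2)m$ reactive moves. On top of this baseline, Breaker would use the orientation $D$ from Step~1 with the same ``in-arcs high-to-low, then out-arcs low-to-high'' priority as in Subsection~\ref{sec:breaker-vs-general}.

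\textbf{Step 3 (Width bound via a relaxed invariant).} I would replace the strict directed-tree invariant of Claim~\ref{clm:directed-trees} by a weaker bookkeeping: each Maker component decomposes as a rooted directed subtree of $D$ plus a small number of ``collision'' in-arcs into non-root vertices, each created by Maker in a single move. Because every vertex of $D$ now has $\ge m$ out-arcs, the out-arc-preemption step driving the induction of Proposition~\ref{prop:width-bound} should survive at the price of an $m$-fold increase in local branching, and the hoped-for conclusion is $w(T) = \mathrm{poly}(d,m)$, and therefore $|T| \le h(T) \cdot w(T) = \mathrm{poly}(d,m) \cdot \log n$.

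\textbf{Main obstacle.} The real difficulty sits inside Step~3. In the original argument the uniqueness of the tree-root makes ``height'' single-valued, so the level-by-level telescoping in~(\ref{eq:level-width}) is additive; under collisions a single vertex can honestly sit at two distinct heights in the same component and the bookkeeping breaks. Salvaging it appears to require an amortization in which every collision is charged a bounded ``debt'' that Breaker later repays using one of the spare out-arcs guaranteed by Step~1, and making this charge globally consistent over the course of the whole game is where I expect the proof either to succeed as stated (polylogarithmic components) or to only yield the softer $o(n)$ form, perhaps by falling back to a sparse-cut argument on the underlying graph.
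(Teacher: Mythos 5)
This statement is a \emph{conjecture}, not a theorem: the paper offers no proof of it, and indeed the remark immediately preceding it exhibits a counterexample showing that the $m=1$ strategy of Subsection~\ref{sec:breaker-vs-general} fails outright for $m\ge 2$. There is therefore no ``paper's own proof'' to compare your proposal against; the honest baseline is that the problem is open.

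Your proposal is a plan, not a proof, and to your credit you say so: Step~3 is flagged as the genuine difficulty and you explicitly hedge on whether the charging scheme can be made ``globally consistent,'' ending with the admission that the argument ``either succeeds \dots\ or only yields the softer $o(n)$ form.'' That is exactly where the paper itself stops. The paper's remark points out that against \emph{some} positive-out-degree orientation, Maker playing $(2:2d-4)$ can claim two in-arcs into a single vertex on one move, destroying the directed-tree invariant of Claim~\ref{clm:directed-trees} and blowing the width up to $\Omega\left(d^{h(S)}\right)$. Your Step~3, as currently formulated (``rooted directed subtree plus a small number of collision in-arcs, each charged a bounded debt''), does not yet neutralize this: nothing in the sketch prevents Maker from creating $\Theta(1)$ collisions per round, each doubling the number of admissible heights, which is precisely the exponential blow-up the remark describes. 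Without a concrete mechanism that caps the \emph{cumulative} number of collisions by $O(\log n)$ over the whole game, or that shows each collision is repaid before it compounds, the claimed conclusion $w(T)=\mathrm{poly}(d,m)$ is unsupported.

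What your proposal does add relative to the paper's discussion is Step~1: requiring out-degree $\ge m$ rather than $\ge 1$. The paper's counterexample is stated only for a positive-out-degree orientation, so it is at least conceivable that a richer orientation defeats Maker's collision trick. The Hakimi-type existence of such an orientation when $d\ge 2m$ is fine, and your $(m:(d-2)m)$ analog of Claim~\ref{clm:reactive} checks out arithmetically (Breaker's $(d-2)m$ reactive claims suffice even when Maker distributes his $m$ steps across several simultaneous merges). But neither of these is where the conjecture is hard. Until Step~3 is resolved --- either by a working amortization or by a fundamentally different argument (your closing mention of a sparse-cut route, for instance) --- this remains a proposal in the same state of incompleteness as the conjecture itself.
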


\paragraph{Very large degrees.}

Our results in Subsection~\ref{sec:our-results} hold for any value
of $d$, but yield little for $d=\Omega\left(n\right)$.

In particular, for $G=K_{n+1}$ and for every $\epsilon>0$, we get
$s_{\left(1+\epsilon\right)n}^{*}\left(K_{n+1}\right)\le2/\epsilon$
from Proposition~\ref{prop:breaker-trivial-win} and $s_{\left(1-\epsilon\right)n}^{*}\left(K_{n+1}\right)\ge\epsilon n$
from Proposition~\ref{prop:tree-strategy-works} (note that $\Psi_{E}\left(K_{n+1},k\right)=n+1-k$
for every $1\le k\le n/2$); however, for $b=d=n$ we get the meaningless
bounds $0\le s_{n}^{*}\left(K_{n}\right)\le n$. A slightly better
upper bound would be $s_{n}^{*}\left(K_{n+1}\right)\le1+\left\lceil n/2\right\rceil $,
just because a $\left(1:b\right)$ game on any graph $G$ lasts $\left\lceil \left|E\left(G\right)\right|/\left(b+1\right)\right\rceil $
rounds.

It would be interesting to get a nontrivial bound on $s_{n}^{*}\left(K_{n+1}\right)$.

\paragraph{Very large components.}

Recall the proof of Theorem~\ref{thm:maker-win} in Section~\ref{sec:maker-strategy},
which combined the tree strategy with edge expansion via Proposition~\ref{prop:tree-strategy-works}.
How far can Proposition~\ref{prop:tree-strategy-works} push \textsc{Maker}?
Can \textsc{Maker} use it to build a connected component of size $\left\lfloor n/2\right\rfloor $?
The following upper bound on the Cheeger constant of regular graphs,
due to Alon~\cite{Alon-97}, says that this is only possible when
the bias is well below $d/2$.
\begin{thm}[\cite{Alon-97}]
For every $d$-regular graph $G$, $\Psi_{E}\left(G,\left\lfloor n/2\right\rfloor \right)\le d/2-\Omega(\sqrt{d})$.
\end{thm}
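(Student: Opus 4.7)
The plan is to translate the claim into a question about an eigenvalue of the adjacency matrix $A$. Encoding a bisection by $x\in\{-1,+1\}^n$ with $x_i=+1$ iff $i\in S$, the identity $x^TLx=4|E(S,V\setminus S)|$ for the Laplacian $L=dI-A$ gives $|E(S,V\setminus S)|/|S|=d/2-x^TAx/(2n)$ when $|S|=n/2$. Hence it suffices to produce a balanced vector $x\in\{-1,+1\}^n$ with $\sum_i x_i=0$ and $x^TAx\geq cn\sqrt d$ for an absolute constant $c>0$.

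The first step is to locate a non-trivial eigenvalue of $A$ of sizable magnitude. The trace identities $\operatorname{tr}A=0$ and $\operatorname{tr}A^2=nd$, together with $\lambda_1=d$, yield $\sum_{i\geq 2}\lambda_i^2=nd-d^2$, which forces $\max_{i\geq 2}|\lambda_i|\geq\sqrt{(nd-d^2)/(n-1)}=\Omega(\sqrt d)$ whenever $n\geq 2d$; the remaining regime is immediate from the trivial bound $\Psi_E(G,\lfloor n/2\rfloor)\leq d/2$ obtained by random bisection. Pick a unit eigenvector $v\perp\mathbf 1$ for such an eigenvalue $\lambda$, so that $|v^TAv|=|\lambda|=\Omega(\sqrt d)$, and round $v$ to a balanced $\pm 1$ vector $x$ by thresholding at its median; a second-moment calculation comparing $\|x\|^2=n$ with $\|v\|^2=1$ suggests $|x^TAx|=\Theta(n|\lambda|)=\Omega(n\sqrt d)$ in expectation. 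A symmetrization trick---coordinatewise product with a uniformly random balanced $\pm 1$ vector---flips the sign if needed, covering the case where the direct construction corresponds to a near-bipartite configuration (a large cut) rather than a small one.

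The main obstacle will be the robustness of this rounding step. If $v$ concentrates its mass on $o(n)$ coordinates, naive sign-thresholding can lose a factor of $\sqrt n$ in the quadratic form and the plan collapses. One remedy invokes a Grothendieck-type inequality, which costs only a universal constant when rounding bilinear forms to $\pm 1$ vectors; another replaces the single eigenvector $v$ by a random direction inside the span of all eigenvectors with absolute eigenvalue $\Omega(\sqrt d)$---of which there are $\Omega(n)$ by the same trace variance---so that with high probability the random direction is well spread and the thresholded vector inherits the quadratic form up to constants. Either variant delivers the desired balanced $x$ with $x^TAx=\Omega(n\sqrt d)$, yielding $\Psi_E(G,\lfloor n/2\rfloor)\leq d/2-\Omega(\sqrt d)$.
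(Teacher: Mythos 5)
This theorem appears in the paper only as a citation to Alon (1997); the paper supplies no proof, so there is nothing internal to compare your argument against, and your attempt stands or falls on its own.

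Your spectral plan has two independent gaps, each of which is fatal. First, the \emph{sign problem}: the trace identities $\operatorname{tr}A=0$, $\operatorname{tr}A^2=nd$ only give $\max_{i\geq 2}|\lambda_i|=\Omega(\sqrt d)$, and this maximum may well be attained by $\lambda_n<0$ (a bipartite $d$-regular graph has $\lambda_n=-d$). For such an eigenvector $v$, $v^TAv=\lambda<0$ points to a \emph{large} cut, not a small one, and the proposed symmetrization does not reverse it: for $y$ uniform over $\{\pm1\}^n$ one has $\mathbb{E}_y\bigl[(v\odot y)^TA(v\odot y)\bigr]=\sum_i A_{ii}v_i^2=0$ because $A$ has zero diagonal (and for balanced $y$ it is $-\lambda/(n-1)$, still $O(\sqrt d/n)$). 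Masking by a random $\pm1$ vector erases the eigenstructure rather than flipping its sign. What you actually need here is the Alon--Boppana/Nilli bound $\lambda_2\geq 2\sqrt{d-1}\,(1-o(1))$ for the \emph{second-largest} (signed) eigenvalue, and this does not follow from the two trace identities you invoke.

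Second, and more seriously, the rounding step that you yourself flag as ``the main obstacle'' is not resolved by either suggested remedy. Grothendieck's inequality concerns bilinear forms $x^TMy$ with $x,y$ ranging freely over $\{\pm1\}^n$; it neither handles the diagonal quadratic form for an indefinite $A$ nor enforces the balance constraint $\sum_i x_i=0$, which your reduction via $x^TLx$ crucially requires. The ``random direction in the span of large eigenvectors'' route presupposes both that there are $\Omega(n)$ eigenvalues of magnitude $\Omega(\sqrt d)$ (the trace-variance argument only guarantees $\Omega(n/d)$ of them, since $\lambda_i^2$ may be as large as $d^2$) and that a random vector in that span thresholds to $\pm1$ with only constant loss in the quadratic form --- but that is precisely the open rounding problem restated, not a solution to it. The difficulty is genuine: $\lambda_2$ can be carried by an eigenvector concentrated on $o(n)$ vertices (a small bottleneck), and thresholding at the median then produces an essentially arbitrary bisection of the remaining expander, with cut $\approx dn/4$, losing everything. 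In that regime the witness for $\Psi_E$ has $|S|\ll n/2$, so insisting on a balanced bisection is already too restrictive a reduction. Alon's actual proof does not proceed by rounding a single eigenvector, and it also needs $n$ large relative to $d$ --- a hypothesis the paper's statement omits and your proof cannot dispense with, since $K_{d+1}$ has $\Psi_E=\lceil(d+1)/2\rceil>d/2$.
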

Proposition~\ref{prop:tree-strategy-works} poses a sufficient, but
obviously not a necessary, condition for the tree strategy to succeed.
It may be possible for \textsc{Maker} to build a connected component
of size $\left\lfloor n/2\right\rfloor $ via the tree strategy or
some other strategy, without relying on expansion.

\paragraph{Short orientations.}

The proof of Lemma~\ref{lem:short-orientation} shows that $l_{1}\left(G\right)\le\chi\left(G\right)+O\left(\log n/\log\log d\right)$.
On the other hand, $l_{1}\left(G\right)\ge g\left(G\right)$ and $l_{1}\left(G\right)\ge l_{0}\left(G\right)=\chi\left(G\right)$
and thus constructions of $d$-regular graphs of girth $\Omega\left(\log_{d-1}n\right)$
and chromatic number $\Omega\left(d/\log d\right)$ (see, e.g.,~\cite{Bollobas-78,ES-63,LPS-88})
demonstrate that sometimes $l_{1}\left(G\right)\ge\chi\left(G\right)+\Omega\left(\log n/\log d\right)$. 

We suspect the correct behavior of $l_{1}\left(G\right)$ is actually
the lower bound, as the following conjecture states.
\begin{conjecture}
\label{conj:short-orientation}Let $G$ be a $d$-regular graph on
$n$ vertices, where $d\ge3$. Then, $l_{1}\left(G\right)=\chi\left(G\right)+O\left(\log n/\log d\right)$.
\end{conjecture}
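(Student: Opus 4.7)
Conjecture~\ref{conj:short-orientation} is open, so I can only outline approaches. The bound in Lemma~\ref{lem:short-orientation} is the result of balancing, over the parameter $k$, a cycle-suffix cost of $\sim 2\log n/\log k$ against an accumulated per-level cost of $\sim k\log n/\log d$; the optimum $k\sim\log d/\log\log d$ is intrinsic to the single-hub fusion strategy, so reaching the conjectured $O(\log n/\log d)$ requires a structurally different construction rather than a reoptimisation.

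For dense graphs, the plan I would try first is a probabilistic orientation. In a uniformly random orientation of $G$, the expected number of simple directed paths of length $\ell$ is at most $nd(d-1)^{\ell-1}2^{-\ell}$, which falls below $1$ as soon as $\ell=\Omega(\log n/\log d)$; simultaneously, the expected number of vertices of zero out-degree is $n\cdot 2^{-d}$. Hence when $d\geq(1+\varepsilon)\log_2 n$ a uniformly random orientation already meets the conjecture after a negligible local repair, and one can even absorb the $\chi(G)$ term. For sparse graphs, I would attempt a \emph{recursive} refinement of Lemma~\ref{lem:short-orientation}: after fusing the cycles of $\mathcal{C}$ into a hub $s$ and orienting the inner layers, the outer layers $G_i$ inherit large girth (hence locally tree-like structure) and can be re-decomposed using the same procedure with sharper parameters, in the hope that the $\log\log d$ geometric sum collapses to a constant factor.

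The main obstacle, in my view, is the intermediate regime $d=\omega(1)$ with $d=O(\log n)$: random orientations leave too many zero-out-degree vertices to repair without creating long repair-paths, and the recursive refinement still inherits the Moore-bound trade-off that produces the $\log\log d$ loss in the first place. Bridging this gap will almost certainly require an extremal-style insight---for instance, a stability result showing that any graph in which $l_{1}(G)$ is much larger than $\chi(G)+\log n/\log d$ must closely resemble a known high-girth construction such as those of~\cite{LPS-88}, for which an explicit short orientation with positive out-degrees can be written down directly from the algebraic description.
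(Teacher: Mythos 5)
Conjecture~\ref{conj:short-orientation} has no proof in the paper---you are right that it is posed as an open problem, and your response is appropriately framed as a sketch of approaches rather than a solution. You also correctly identify the source of the $\log\log\delta$ loss in Lemma~\ref{lem:short-orientation} (balancing the cycle-suffix cost against the $k$-degeneracy of the outer level sets) and correctly note that the lower bounds $l_{1}(G)\geq\chi(G)$ and $l_{1}(G)\geq g(G)$ make $\chi(G)+O(\log n/\log d)$ the natural target.

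However, the dense-case argument is wrong. For $d\geq 3$ the quantity $nd(d-1)^{\ell-1}2^{-\ell}=\frac{nd}{d-1}\bigl(\frac{d-1}{2}\bigr)^{\ell}$ is \emph{nondecreasing} in $\ell$ (strictly increasing for $d\geq 4$) and always exceeds $1$; it never ``falls below $1$ as soon as $\ell=\Omega(\log n/\log d)$.'' The base of the exponential is $\frac{d-1}{2}\geq 1$, not a quantity less than $1$, so a first-moment argument gives you nothing. This is no accident: when $d$ is close to $n$, a uniformly random orientation of $G$ contains a near-complete random tournament, and every tournament on $m$ vertices has a Hamiltonian directed path, so $l(D)$ is typically $\Theta(n)$ rather than $O(\log n/\log d)$. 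Indeed, for $K_{n}$ the expected number of directed Hamiltonian paths in a random orientation is $n!\,2^{-(n-1)}\to\infty$. Consequently a uniformly random orientation is essentially the worst possible choice in the dense regime, and neither ``negligible local repair'' nor ``absorbing the $\chi(G)$ term'' can rescue it: the long paths are not a boundary defect to be patched, they permeate the orientation. (A random \emph{acyclic} orientation does not help either---by the Gallai--Hasse--Roy--Vitaver theorem its longest path is at least $\chi(G)-1$, and for $K_{n}$ it is exactly $n-1$.)

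The sparse-case idea---recursing the cycle-fusion construction on the outer level sets, which are $(k-1)$-degenerate---is at least not obviously wrong, but you should expect the Moore-bound tradeoff to reproduce itself at every level of the recursion, so it is unclear the geometric sum collapses as hoped. The honest state of affairs is that both directions you propose face a real obstacle, not merely the intermediate regime $d=O(\log n)$ you flag.
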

One can also ask about the value of $l_{j}\left(G\right)$ for $j>1$.

\subsection*{Acknowledgements}

The authors wish to thank Noga Alon, Asaf Ferber, Danny Hefetz, and
Michael Krivelevich for useful discussions and comments.

\end{document}